\DeclareMathOperator{\ann}{ann}
\DeclareMathOperator{\Ass}{Ass}
\DeclareMathOperator{\Ext}{Ext}
\DeclareMathOperator{\Hom}{Hom}
\DeclareMathOperator{\Image}{Im}
\DeclareMathOperator{\indeg}{indeg}
\DeclareMathOperator{\Ker}{Ker}
\DeclareMathOperator{\Tor}{Tor}
\renewcommand{\ge}{\geqslant}
\renewcommand{\le}{\leqslant}
\newcommand{\bn}{\mathbb{N}}
\newcommand{\bz}{\mathbb{Z}}
\newcommand{\fa}{\mathfrak{a}}
\newcommand{\fm}{\mathfrak{m}}
\newcommand{\fp}{\mathfrak{p}}
\newcommand{\fq}{\mathfrak{q}}
\newcommand{\fu}{\mathfrak{u}}
\renewcommand{\iff}{if and only if }
\newcommand{\lra}{\longrightarrow}
\newcommand{\xlra}{\xlongrightarrow}
\theoremstyle{plain}
\newtheorem{theorem}{Theorem}[section]
\newtheorem{lemma}[theorem]{Lemma}
\newtheorem{proposition}[theorem]{Proposition}
\theoremstyle{definition}
\newtheorem{definition}[theorem]{Definition}
\newtheorem{example}[theorem]{Example}
\newtheorem{notations}[theorem]{Notations}
\newtheorem{para}[theorem]{}
\newtheorem{setup}[theorem]{Setup}
\theoremstyle{remark}
\newtheorem{remark}[theorem]{Remark}
\numberwithin{equation}{section}
\title[Asymptotic v-numbers of graded (co)homology modules]{Asymptotic v-numbers of graded (co)homology modules involving powers of an ideal}
\author[D.~Ghosh]{Dipankar Ghosh}
\address{Department of Mathematics, Indian Institute of Technology Kharagpur, West Bengal - 721302, India}
\email{dipankar@maths.iitkgp.ac.in, dipug23@gmail.com}
\urladdr{\url{https://orcid.org/0000-0002-3773-4003}}
\author[S.~Pramanik]{Siddhartha Pramanik}
\address{Department of Mathematics, Indian Institute of Technology Kharagpur, West Bengal - 721302, India}
\email{siddharthap@kgpian.iitkgp.ac.in, pramaniksiddhartha2@gmail.com}
\subjclass[2020]{Primary 13D07, 13A02, 13A15}
\keywords{Graded rings and modules; Associate primes; Ext; Tor; v-numbers}
\begin{document}
    \pagenumbering{arabic}
    \thispagestyle{empty}
    \begin{abstract}
    Let $R$ be a Noetherian $\bn$-graded ring. Let $L$, $M$ and $N$ be finitely generated graded $R$-modules with $N \subseteq M$. For a homogeneous ideal $I$, and for each fixed $k \in \bn$, we show the asymptotic linearity of v-numbers of the graded modules $ \Ext_R^{k}(L,{I^{n}M}/{I^{n}N})$ and $\Tor_k^{R}(L,{I^{n}M}/{I^{n}N})$ as functions of $n$. Moreover, under some conditions on $\Ext_R^k(L,M)$ and $\Tor_k^R(L,M)$ respectively, we prove similar behaviour for v-numbers of $\Ext_R^{k}(L,{M}/{I^{n}N})$ and $ \Tor_k^{R}(L,{M}/{I^{n}N})$. The last result is obtained by proving the asymptotic linearity of v-number of $(U+I^{n}V)/I^{n}W$, where $U$, $V$ and $W$ are graded submodules of a finitely generated graded $R$-module such that $W \subseteq V$ and $(0:_{U}I) = 0$.
    \end{abstract}
    \maketitle
    
\section{Introduction}
    The notion of Vasconcelos invariant of a homogeneous ideal in a polynomial ring over a field was introduced in \cite{CSTPV} to analyze the asymptotic behaviour of the minimum distance of projective Reed-Muller type codes. In the literature, this numerical invariant is known as v-number, and has been named after Wolmer Vasconcelos. The notion has been easily extended from homogeneous ideals to graded modules in \cite[Def.~1.4]{FG24}. The asymptotic behaviour of Vasconcelos invariants of powers of homogeneous ideals, or graded modules involving powers of ideals, or graded filtrations, is the current interest of contemporary researchers, see, for instances, \cite{Co24}, \cite{FS}, \cite{FG24}, \cite{Fi24}, \cite{BMK}, \cite{FS24}, \cite{VS24}, \cite{KNS24} and \cite{FM24}. In this article, we investigate the asymptotic behaviour of Vasconcelos invariants of certain graded (co)homology (namely, Ext and Tor) modules involving powers of a homogeneous ideal.

    \begin{setup}\label{setup}
        Unless specified, let $R = R_{0}[x_1,\dots,x_d]$ be a Noetherian $\bn$-graded ring. Let $L$ and $M$ be finitely generated $\bz$-graded $R$-modules, and $I$ be a homogeneous ideal of $R$. Let $N$ be a graded submodule of $M$. Let $J$ be a reduction ideal of $I$, generated by homogeneous elements $y_1,\dots, y_c$ of degree $d_1\le \cdots \le d_c$ respectively.
    \end{setup}

    The set of all associated prime ideals of the $R$-module $M$ is denoted by $\Ass_{R}(M)$. For $n \in \bz$, $M_{n}$ denotes the $n$th graded component of $M$.

    \begin{definition}\label{defn:v-num-M}
        For $\fp \in \Ass_{R}(M)$, the local v-number (or Vasconcelos invariant) of $M$ at $\fp$ is defined to be 
        $$v_{\fp}(M) := \inf \{n : \text{there exists } x \in M_{n} \text{ such that } \fp = (0 :_{R} x)  \},$$
        where $(0 :_{R} x)$ denotes the set of elements of $R$ which annihilate $x$. The v-number (or Vasconcelos invariant) of $M$ is defined as 
        $$ v(M) := \inf \{ v_{\fp}(M) : \fp \in \Ass_{R}(M) \} .$$
        By convention, $v(0) = \infty$.
    \end{definition}

    In \cite{Br79}, Brodmann showed that the set $\Ass_{R}({M}/{I^{n}M})$ stabilizes for all $n \gg 0$. Motivated by Brodmann's result, in \cite{Co24}, Conca proved that when $R$ is a domain, the function $v(R/I^{n})$ is eventually linear in $n$, i.e., $v(R/I^{n}) = an +b$ for all $n \gg 0$, where $a$ and $b$ are some constants. Ficarra-Sgroi showed this result independently, when $R$ is a polynomial ring over a field, see \cite[Thm.~3.1]{FS}. Fiorindo-Ghosh strengthen these results in \cite[Thm.~2.14]{FG24} by proving that $v({M}/{I^{n}M})$ is eventually linear in $n$ provided $(0:_MI)=0$. Note that \cite[Thm.~2.14]{FG24} deals with the asymptotic behaviour of $v({M}/{I^{n}N})$ under some conditions on $M$ and $N$.

In classical multiplicity theory, it is known that the length function $\lambda(M/I^nM)$ is given by a polynomial (called Hilbert-Samuel polynomial) in $n$ for all $n\gg 0$, where $M$ is a finitely generated module over a Noetherian local ring $R$, and $I$ is an ideal of $R$ such that $\lambda(M/IM)$ is finite. In this context, Kodiyalam in \cite[Thm.~2]{Ko93} showed that for each fixed $k\ge 0$, the functions $\lambda(\Ext_R^k(L,M/I^nM))$ and $\lambda(\Tor_k^R(L,M/I^nM))$ are eventually polynomials in $n$, provided $\lambda(L\otimes M)$ is finite, where $L$ is also a finitely generated module over $R$.
    
The module theoretic definition of Vasconcelos invariant has opened the path to examine the invariant in a more general way. For each $n\in\mathbb{N}$, there are natural isomorphisms $ \Ext_{R}^0(R,M/I^{n}N) \cong M/I^{n}N \cong \Tor_0^{R}(R, M/I^{n}N)$. Moreover, for each $k \in \bn$, the $R$-modules $\Ext_{R}^k(L,M/I^{n}N)$ and $\Tor_k^{R}(L, M/I^{n}N)$ have natural $\bz$-graded structures. Fix $k \in \bn$. In \cite[Cor.~3.5]{KW04}, Katz-West showed that for all $n\gg 0$, the sets $\Ass_{R}(\Ext_R^{k}(L,{M}/{I^{n}N}))$ and $\Ass_{R}(\Tor_{k}^{R}(L,{M}/{I^{n}N}))$ are independent of $n$. See \cite{GP19} for a concise proof of this result. The stability of $\Ass_{R}(\Tor_{k}^{R}(L,R/I^{n}))$ for $n\gg 0$ was first proved in \cite[Thm.~1]{MS93} by Melkersson-Schenzel. Keeping these results in mind, a natural question arises whether the Vasconcelos invariants of $\Ext_R^{k}(L,{M}/{I^{n}N})$ and $\Tor_{k}^{R}(L,{M}/{I^{n}N})$ as functions of $n$ are linear. The aim of this article is to address this question. In this regard, we mainly prove the following theorem. Here $(0:_{L}I)$, as defined in \ref{not:colon}, is isomorphic to $\Hom_R(R/I,L)$. Thus $(0:_{L}I)=0$ \iff $I$ contains an $L$-regular element (possibly, $L=0$).


\begin{theorem}[See Theorems~\ref{thm:main-Ext} and \ref{thm:main-Tor} for stronger results]\label{thm:main-simple}
    With {\rm \Cref{setup}}, fix $k \in \bn$. Set $H := \Ext_R^k(L,M)$ and $H_n := \Ext_R^k(L,M/I^n M)$, or $H := \Tor_k^R(L,M)$ and $H_n := \Tor_k^R(L,M/ I^n M)$ for all $n\ge 0$. Assume that $(0 :_{H} I) = 0$. Then $H_n$ is eventually zero, or $v(H_n)$ is eventually a linear function of $n$ whose leading coefficient is the degree of a minimal generator of $I$.
\end{theorem}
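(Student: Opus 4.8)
The plan is to reduce \Cref{thm:main-simple}, in both the $\Ext$ and the $\Tor$ case, to the assertion that
\[
n\longmapsto v\big((U+I^{n}V)/I^{n}W\big)
\]
is eventually zero, or eventually a linear function whose leading coefficient is the degree of a minimal generator of $I$, whenever $U,V,W$ are graded submodules of a finitely generated graded $R$-module with $W\subseteq V$ and $(0:_{U}I)=0$. This last statement is the technical core of the paper; it refines \cite[Thm.~2.14]{FG24} and is proved separately (via Brodmann/Katz--West--type stability of associated primes together with an analysis of minimal degrees). Granting it, the task is to realize $H_{n}$, up to a degree--independent shift $n\mapsto n-c$ of the variable, as a module of exactly this shape in which the ``constant part'' $U$ is isomorphic to $H$; then the hypothesis $(0:_{H}I)=0$ is precisely what licenses the conclusion, while a shift affects neither eventual vanishing nor eventual linearity nor the leading coefficient.

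\emph{The $\Tor$ case.} Fix a graded resolution $F_{\bullet}\to L$ by finitely generated free $R$-modules and put $G_{\bullet}:=F_{\bullet}\otimes_{R}M$, with differentials $d_{\bullet}$, cycles $Z_{k}=\ker d_{k}$, boundaries $B_{k}=\im d_{k+1}$, so that $Z_{k}/B_{k}=\Tor_{k}^{R}(L,M)=H$. As the $F_{j}$ are free, $F_{j}\otimes_{R}I^{n}M=I^{n}G_{j}$ and $F_{\bullet}\otimes_{R}(M/I^{n}M)=G_{\bullet}/I^{n}G_{\bullet}$, and a standard computation with this quotient complex gives
\[
\Tor_{k}^{R}(L,M/I^{n}M)=H_{k}\big(G_{\bullet}/I^{n}G_{\bullet}\big)=\frac{d_{k}^{-1}(I^{n}G_{k-1})}{B_{k}+I^{n}G_{k}}.
\]
By the graded Artin--Rees lemma applied to $\im d_{k}\subseteq G_{k-1}$ there is $c\in\bn$ with $I^{n}G_{k-1}\cap\im d_{k}=I^{n-c}\big(I^{c}G_{k-1}\cap\im d_{k}\big)$ for $n\ge c$. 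Setting $Q:=I^{c}G_{k-1}\cap\im d_{k}$ and $D:=d_{k}^{-1}(Q)$, one has $Z_{k}\subseteq D$ and, since $d_{k}(I^{c}G_{k})=I^{c}\im d_{k}\subseteq Q$, also $I^{c}G_{k}\subseteq D$; the standard preimage computation then yields $d_{k}^{-1}(I^{n}G_{k-1})=Z_{k}+I^{n-c}D$ for $n\ge c$. Passing to images in the finitely generated graded module $G_{k}/B_{k}$ (legitimate since $B_{k}$ lies in both numerator and denominator), we obtain for $n\ge c$
\[
\Tor_{k}^{R}(L,M/I^{n}M)\;\cong\;\frac{\overline{Z_{k}}+I^{n-c}\,\overline{D}}{I^{n-c}\big(I^{c}(G_{k}/B_{k})\big)}\;=\;\frac{U+I^{m}V}{I^{m}W}
\]
with $m=n-c$, $U=\overline{Z_{k}}=Z_{k}/B_{k}=H$, $V=\overline{D}$, $W=I^{c}(G_{k}/B_{k})$; here $W\subseteq V$ since $I^{c}G_{k}\subseteq D$, and $(0:_{U}I)=(0:_{H}I)=0$ by hypothesis. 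The auxiliary assertion now finishes this case.

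\emph{The $\Ext$ case} is formally dual. With $C^{\bullet}:=\Hom_{R}(F_{\bullet},M)$, coboundary $\delta^{\bullet}$, cocycles $Z^{k}=\ker\delta^{k}$ and coboundaries $B^{k}=\im\delta^{k-1}$, one has $Z^{k}/B^{k}=\Ext_{R}^{k}(L,M)=H$; and $\Hom_{R}(F_{j},I^{n}M)=I^{n}C^{j}$ (the $F_{j}$ being finitely generated free), so $\Hom_{R}(F_{\bullet},M/I^{n}M)=C^{\bullet}/I^{n}C^{\bullet}$ and
\[
\Ext_{R}^{k}(L,M/I^{n}M)=H^{k}\big(C^{\bullet}/I^{n}C^{\bullet}\big)=\frac{(\delta^{k})^{-1}(I^{n}C^{k+1})}{B^{k}+I^{n}C^{k}}.
\]
Artin--Rees applied to $\im\delta^{k}\subseteq C^{k+1}$ produces $c\in\bn$, $Q:=I^{c}C^{k+1}\cap\im\delta^{k}$ and $D:=(\delta^{k})^{-1}(Q)$ with $Z^{k}\subseteq D$, $I^{c}C^{k}\subseteq D$ and $(\delta^{k})^{-1}(I^{n}C^{k+1})=Z^{k}+I^{n-c}D$ for $n\ge c$; passing to $C^{k}/B^{k}$ exhibits $\Ext_{R}^{k}(L,M/I^{n}M)$, for $n\ge c$, as $(U+I^{m}V)/I^{m}W$ with $m=n-c$, $U=Z^{k}/B^{k}=H$, $V=\overline{D}$, $W=I^{c}(C^{k}/B^{k})\subseteq V$ and $(0:_{U}I)=(0:_{H}I)=0$. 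Again the auxiliary assertion concludes the proof.

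The genuine difficulty lies in the auxiliary assertion about $v\big((U+I^{n}V)/I^{n}W\big)$; the reduction above is essentially homological bookkeeping, whose one delicate point is to run the Artin--Rees step so that the constant submodule surviving in the quotient is exactly $H=Z_{k}/B_{k}$ (resp.\ $Z^{k}/B^{k}$), which is what makes the hypothesis $(0:_{H}I)=0$ usable. The same argument, carried out with $I^{n}N$ in place of $I^{n}M$ and $F_{j}\otimes_{R}N$, $\Hom_{R}(F_{j},N)$ in the relevant spots, yields the stronger Theorems~\ref{thm:main-Ext} and~\ref{thm:main-Tor}.
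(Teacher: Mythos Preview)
Your proposal is correct and follows essentially the same route as the paper: the paper packages the Artin--Rees step into a general lemma (\Cref{lem:homology}) about homology of complexes of the form $A/I^{n}A'\to B/I^{n}B'\to C/I^{n}C'$, then specializes to $\Ext$ and $\Tor$ in Lemmas~\ref{lem:ext:interpret} and \ref{lem:tor:interpret}, arriving at exactly your presentation $H_{n}\cong (U+I^{n-q}V)/I^{n-q}W$ with $U\cong H$, after which \Cref{thm:UVW} (your ``auxiliary assertion'') is invoked. Your direct computation with $G_{\bullet}=F_{\bullet}\otimes_{R}M$ (resp.\ $C^{\bullet}=\Hom_{R}(F_{\bullet},M)$) and the explicit identification $d_{k}^{-1}(I^{n}G_{k-1})=Z_{k}+I^{n-c}D$ is precisely the content of \Cref{lem:homology} unpacked in this special case, so the two arguments are the same in substance.
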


\begin{remark}
    It is to be noted that we actually prove more stronger results than what is stated in \Cref{thm:main-simple}. Indeed, with {\rm \Cref{setup}}, fix $k \in \bn$. Set $H := \Ext_R^k(L,M)$ and $H_n := \Ext_R^k(L,M/I^n N)$, or $H := \Tor_k^R(L,M)$ and $H_n := \Tor_k^R(L,M/ I^n N)$ for all $n\ge 0$. Suppose $ I^{n_0}M \subseteq N$ for some $n_0\ge 0$ (e.g., $N=M$, or $N=\fa M$ for some homogeneous ideal $\fa$ with $I \subseteq \sqrt{\fa}$). Let $\fp\in\Ass_R(H_n)$ for all $n\gg 0$. Then the local v-number $v_{\fp}(H_n)$ is eventually linear in $n$, see Theorems~\ref{thm:main-Ext} and \ref{thm:main-Tor}.
\end{remark}

This article is arranged as follows. In \Cref{sec2}, we discuss the asymptotic linearity of v-numbers of $\Ext_R^k(L,I^nM/I^nN)$ and $\Tor_k^R(L,I^nM/I^nN)$ as functions of $n$ for each fixed $k\ge 0$. In \Cref{sec3}, first we show that each of $\Ext_R^k(L,M/I^nN)$ and $\Tor_k^R(L,M/I^nN)$ can be expressed as graded module of the form $(U+I^{n}V)/I^{n}W$ for all $n\gg 0$. Then, in \Cref{thm:UVW}, we prove the asymptotic linearity of the local v-number of $(U+I^{n}V)/I^{n}W$ provided $(0:_{U}I) = 0$. We conclude \Cref{sec3} by proving Theorems~\ref{thm:main-Ext} and \ref{thm:main-Tor}. Finally, in \Cref{sec:exam}, we provide a number of examples which complement our results. Most notably, Examples~\ref{example1} and \ref{example3} ensure that the conditions $(0:_{U}I) = 0$, $\big(0:_{\Ext_{R}^{k}(L,M)}I\big) = 0$ and $\big(0:_{\Tor_k^R(L,M)}I\big) = 0$ in Theorems~\ref{thm:UVW}, \ref{thm:main-Ext} and \ref{thm:main-Tor} respectively cannot be removed.

\begin{notations}\label{not:colon}
With \Cref{setup}, we use the following notations:
\begin{center}
$(0:_{L}I):= \{ x\in L : Ix=0 \}, \quad \Gamma_{I}(L) = \bigcup_{n \ge 1}\left(0:_{L} I^{n}\right), \quad \ann_{L}(I) := (0:_{L}I)$
\end{center}
and $\indeg(L) := \inf\{ n : L_n \neq 0 \} $. By convention, $\indeg(0) = \infty$.
\end{notations}

\section{Asymptotic v-numbers of $\Ext_R^k(L,I^nM/I^nN)$ and $\Tor_k^R(L,I^nM/I^nN)$}\label{sec2}

In this section, we analyze the asymptotic behaviour of Vasconcelos invariants of the modules $\Ext_R^k(L,I^nM/I^nN)$ and $\Tor_k^R(L,I^nM/I^nN)$ as functions of $n$ for every fixed $k\ge 0$.
    

\begin{para}
    With \Cref{setup}, let $\mathscr{R}(I) = \bigoplus_{n\ge 0}I^{n}$ be the Rees algebra of $I$. Then the Rees module $\mathscr{R}(I,M) = \bigoplus_{n\ge 0}I^{n}M$ is a finitely generated graded $\mathscr{R}(I)$-module. Clearly, $\mathscr{R}(I,N) = \bigoplus_{n\ge 0}I^{n}N$ is a graded $\mathscr{R}(I)$-submodule of $\mathscr{R}(I,M)$. So the quotient $\frac{\mathscr{R}(I,M)}{\mathscr{R}(I,N)} = \bigoplus_{n \ge 0} \frac{I^{n}M}{I^{n}N}$ is a finitely generated graded $\mathscr{R}(I)$-module.
\end{para}


\begin{para}\label{para:stability}
    With \Cref{setup}, let $\mathcal{N}:=\bigoplus_{n \ge 0} N_n$ be a finitely generated graded module over the graded ring $\mathscr{R}(I)$. Let $\mathbb{F}: \cdots \to F_n \to F_{n-1} \to \cdots \to F_1 \to F_0 \to 0$ be a graded free resolution of $L$ consisting of finitely generated graded free $R$-modules. Then, for $a \in I^r$, the map $ N_n \xrightarrow{a} N_{n+r}$ induces a map $\Hom_R(\mathbb{F},N_n) \xlongrightarrow{a} \Hom_R(\mathbb{F},N_{n+r})$. It induces a map on the cohomology modules $\Ext_R^k(L,N_n) \xlongrightarrow{a} \Ext_R^k(L,N_{n+r})$. Thus $\bigoplus_{n \ge 0}{\Ext_R^k(L,N_n)}$ is a graded module over the graded ring $\mathscr{R}(I)$. Consequently,
    \begin{align*}
    \Ext_R^k \big(L, \mathcal{N} \big) = \bigoplus_{n \ge 0}{\Ext_R^k(L,N_n)}.
    \end{align*}
    is a finitely generated graded $\mathscr{R}(I)$-module. As $J$ is a reduction ideal of $I$, it follows that $\Ext_R^k \big(L, \mathcal{N} \big)$ is a finitely generated graded $\mathscr{R}(J)$-module. Similarly, $\bigoplus_{n \ge 0} \Tor_k^R(L,N_n)$ is a finitely generated graded $\mathscr{R}(J)$-module.
    Hence, by \cite[Lem.~2.1]{MS93},
    for each $k\in\bn$, the sets $ \Ass_{R}\big(\Ext_R^{k}(L, N_n)\big) $ and $\Ass_R \big(\Tor_{k}^{R}(L,N_n) \big)$ are independent of $n$ for $n \gg 0$. In particular, $\mathcal{N}$ can be taken as $\mathscr{R}(I,M)/\mathscr{R}(I,N)$.
\end{para}

\begin{para}[Bigraded structures on Ext and Tor]\label{para:bigrading}
    With \Cref{setup}, let $\deg(x_i)= f_i$ for $1 \le i \le d$. Then $\mathscr{R}(J) = \bigoplus_{n \ge 0} J^{n}$ can be considered as an $\bn^2$-graded ring, where the $(n,l)$th graded component of $\mathscr{R}(J)$ is the $l$th graded component of $J^{n}$ for every $(n,l)\in \bn^2$. Thus $\mathscr{R}(J) = R_{0}[x_1,\dots,x_d,y_1,\dots,y_c]$, where $\deg(x_i)=(0,f_i)$ for $1 \le i \le d$, and $\deg(y_j) = (1,d_j)$ for $1 \le j \le c$. Set $I^nM := 0$ for all $n<0$. Hence, setting
    $\Ext_{R}^{k}\big(L, \mathscr{R}(I,M)/\mathscr{R}(I,N)\big)_{(n,l)} := \Ext_R^{k} \big(L, I^{n}M/I^{n}N\big)_{l}$ for all $(n,l)\in \bz^2$,
    the module $\Ext_{R}^{k}\big(L, \mathscr{R}(I,M)/\mathscr{R}(I,N)\big)$ becomes a $\bz^2$-graded $\mathscr{R}(J)$-module. Since we are only changing the grading, this bigraded $\mathscr{R}(J)$-module is still finitely generated. Thus both $\Ext_{R}^{k}\big(L, \mathscr{R}(I,M)/\mathscr{R}(I,N)\big)$ and $\Tor_{k}^{R}\big(L, \mathscr{R}(I,M)/\mathscr{R}(I,N)\big)$ are finitely generated $\bz^2$-graded $\mathscr{R}(J)$-modules.
\end{para}



    The following result shows that for each fixed $k\ge 0$, both initial degree and Vasconcelos invariant of $\Ext_R^{k}(L,{I^nM}/{I^{n}N})$ as functions of $n$ are eventually linear functions, which share the same slope.

\begin{proposition}\label{prop:v-Ext-lin}
    With {\rm \Cref{setup}}, fix $k \in \bn$. Set $H_n := \Ext_R^{k}(L, I^{n}M/I^{n}N)$ for all $n \ge 0$. Let $\fp \in \Ass_{R}(H_n)$ for all $n \gg 0$. Set $\mathcal{H} := \Ext_{R}^{k}\big(L, \mathscr{R}(I,M)/\mathscr{R}(I,N)\big)$ and $\delta := \inf \Big\{ j : 1\le j\le c, \; y_{j} \notin \sqrt{ \ann_{\mathscr{R}(J)} (\mathcal{H}) } \Big\}$. Then, there exist $a \in \{d_{\delta},\dots, d_c \}$ and $b \in \bz$ such that
    \begin{center}
       $ v_{\fp}( H_n ) = an+b$ for all $n \gg 0$.
    \end{center}
    Moreover, both the functions $\indeg(H_n)$ and $v(H_n)$ are eventually linear in $n$ with the same leading coefficient $d_{\delta} \in \{ d_1,\dots,d_c\}.$
\end{proposition}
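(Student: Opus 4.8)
The plan is to reduce everything to one statement about finitely generated $\bz^2$-graded $\mathscr{R}(J)$-modules and their first-coordinate slices. For such a module $\mathcal{D}$ write $\mathcal{D}_{n}:=\bigoplus_{l}\mathcal{D}_{(n,l)}$, a finitely generated $\bz$-graded $R$-module, and $\delta(\mathcal{D}):=\inf\{\,j:\ y_j\notin\sqrt{\ann_{\mathscr{R}(J)}(\mathcal{D})}\,\}$; then, with the bigrading of \ref{para:bigrading}, $\mathcal{H}_{n}=H_{n}$ and $\delta(\mathcal{H})=\delta$. I would deduce the Proposition from the following. \emph{Claim:} if $\mathcal{D}_{n}\neq 0$ for all $n\gg 0$, then $\indeg(\mathcal{D}_{n})$ is eventually a linear function of $n$ with leading coefficient $d_{\delta(\mathcal{D})}\in\{d_1,\dots,d_c\}$.

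Granting the Claim, first apply it to $\mathcal{D}=\mathcal{H}$ to get that $\indeg(H_{n})$ is eventually linear with slope $d_{\delta}$. For $v_{\fp}(H_{n})$ I would use the reformulation: for a finitely generated graded $R$-module $H$ and a homogeneous prime $\fp$, one has $v_{\fp}(H)=\indeg\big((0:_{H}\fp)/T_{\fp}(H)\big)$, where $T_{\fp}(H)$ is the graded submodule of elements of $(0:_{H}\fp)$ killed by some homogeneous element of $R\setminus\fp$ (indeed $x\in H_{l}$ satisfies $(0:_{R}x)=\fp$ precisely when $\fp x=0$ and $x\notin T_{\fp}(H)$, both $\fp$ and $(0:_Rx)$ being homogeneous). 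Now $(0:_{\mathcal{H}}\fp)$ is a finitely generated bigraded $\mathscr{R}(J)$-submodule of $\mathcal{H}$ (the $R$- and $\mathscr{R}(J)$-actions commute and $R\subseteq\mathscr{R}(J)$), the corresponding submodule $\mathcal{T}\subseteq(0:_{\mathcal{H}}\fp)$ is again a bigraded $\mathscr{R}(J)$-submodule, and slicing in the first coordinate is exact, so for $\mathcal{D}^{\fp}:=(0:_{\mathcal{H}}\fp)/\mathcal{T}$ one has $\indeg(\mathcal{D}^{\fp}_{n})=v_{\fp}(H_{n})$. Since $\mathcal{D}^{\fp}$ is a subquotient of $\mathcal{H}$, $\ann(\mathcal{H})\subseteq\ann(\mathcal{D}^{\fp})$, hence $\delta(\mathcal{D}^{\fp})\ge\delta$; and $\delta(\mathcal{D}^{\fp})\le c$, for otherwise every $y_j$ is nilpotent on $\mathcal{D}^{\fp}$, making it finitely generated over $R$ with only finitely many nonzero slices, contradicting $\mathcal{D}^{\fp}_{n}\neq 0$ for $n\gg 0$ (which holds as $\fp\in\Ass_{R}(H_{n})$ for $n\gg 0$). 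The Claim then gives $v_{\fp}(H_{n})=an+b$ with $a=d_{\delta(\mathcal{D}^{\fp})}\in\{d_{\delta},\dots,d_{c}\}$. Finally $v(H_{n})=\min_{\fp\in\Ass_{R}(H_{n})}v_{\fp}(H_{n})$ is, for $n\gg 0$, a minimum of finitely many eventually-linear functions (as $\Ass_{R}(H_{n})$ stabilizes by \ref{para:stability}), hence eventually linear; its slope is $\ge d_{\delta}$ since $v_{\fp}(H_{n})\ge\indeg(H_{n})$, and $\le d_{\delta}$ by the following: pick a minimal prime $Q$ of $\ann(\mathcal{H})$ with $y_{\delta}\notin Q$ (possible as $y_{\delta}\notin\sqrt{\ann(\mathcal{H})}$) and a bihomogeneous $\xi\in\mathcal{H}_{(n_0,l_0)}$ with $\ann_{\mathscr{R}(J)}(\xi)=Q$; then $\mathscr{R}(J)\xi\cong(\mathscr{R}(J)/Q)(-n_0,-l_0)$ is a domain on which $y_{\delta}$ acts injectively, so $y_{\delta}^{\,m}\xi\neq 0$ lies in degree $l_0+md_{\delta}$ with $R$-annihilator $Q\cap R$, whence $Q\cap R\in\Ass_{R}(H_{n})$ and $v(H_{n})\le v_{Q\cap R}(H_{n})\le nd_{\delta}+(l_0-n_0 d_{\delta})$ for all $n\ge n_0$.

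To prove the Claim I would take a bihomogeneous prime filtration $0=\mathcal{D}^{(0)}\subset\cdots\subset\mathcal{D}^{(p)}=\mathcal{D}$ with $\mathcal{D}^{(i)}/\mathcal{D}^{(i-1)}\cong(\mathscr{R}(J)/P_i)(a_i,b_i)$, the $P_i$ bihomogeneous primes containing $\ann(\mathcal{D})$. Since the initial degree of a short exact sequence of graded modules is the minimum of those of the sub and the quotient, $\indeg(\mathcal{D}_{n})=\min_i\big(\indeg\big((\mathscr{R}(J)/P_i)_{n+a_i}\big)-b_i\big)$. Fix $i$. If $y_j\in P_i$ for all $j$ then $(\mathscr{R}(J)/P_i)_{n}=0$ for $n\neq 0$ and that factor is irrelevant for $n\gg 0$; otherwise set $j_0:=\min\{j:y_j\notin P_i\}$. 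A bihomogeneous element of $\mathscr{R}(J)/P_i$ of first-degree $n$ is a combination of monomials $\bar x^{\alpha}\bar y^{\beta}$ with $\sum_j\beta_j=n$ whose surviving terms have $\beta_j=0$ whenever $y_j\in P_i$, so its second-degree is at least $\sum_{j\ge j_0}\beta_j d_j\ge n d_{j_0}$, giving $\indeg\big((\mathscr{R}(J)/P_i)_{n}\big)\ge n d_{j_0}$; conversely $\bar y_{j_0}$ is a nonzerodivisor on the domain $\mathscr{R}(J)/P_i$, so $\indeg\big((\mathscr{R}(J)/P_i)_{n+1}\big)\le\indeg\big((\mathscr{R}(J)/P_i)_{n}\big)+d_{j_0}$. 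Hence $\indeg\big((\mathscr{R}(J)/P_i)_{n}\big)-n d_{j_0}$ is non-increasing and bounded below, so eventually constant, and $\indeg\big((\mathscr{R}(J)/P_i)_{n}\big)=n d_{j_0}+e_i$ for $n\gg 0$. Thus $\indeg(\mathcal{D}_{n})$ is eventually a minimum of finitely many affine functions, hence eventually affine, with slope $\min_i d_{j_0(i)}$ over the relevant $i$. Each $P_i\supseteq\sqrt{\ann(\mathcal{D})}$ forces $j_0(i)\ge\delta(\mathcal{D})$, so the slope is $\ge d_{\delta(\mathcal{D})}$; and it equals $d_{\delta(\mathcal{D})}$ because some minimal prime of $\ann(\mathcal{D})$ avoiding $y_{\delta(\mathcal{D})}$ occurs among the $P_i$ (minimal primes of the support are associated primes, hence appear in any prime filtration). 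This proves the Claim.

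The crux is the Claim, and within it the passage from the a priori bound $\indeg(\mathcal{D}_{n})=n d_{\delta(\mathcal{D})}+O(1)$ to genuine eventual linearity, together with the exact identification of the slope as $d_{\delta}$ even when there are coincidences among the $d_j$. The prime filtration handles both at once: on each quotient $\mathscr{R}(J)/P_i$ the variable $\bar y_{j_0}$ of smallest degree not in $P_i$ is a nonzerodivisor, which is exactly what upgrades boundedness of $\indeg\big((\mathscr{R}(J)/P_i)_{n}\big)-n d_{j_0}$ to eventual constancy; the minimal-primes observation then pins the slope. The remaining work — verifying that $(0:_{\mathcal{H}}\fp)$, $\mathcal{T}$ and $\mathcal{D}^{\fp}$ are bigraded $\mathscr{R}(J)$-submodules/quotients and that $\indeg(\mathcal{D}^{\fp}_{n})=v_{\fp}(H_{n})$ — is routine once one uses that associated primes and homogeneous annihilator ideals of graded $R$-modules are homogeneous and detected by homogeneous elements of $R$.
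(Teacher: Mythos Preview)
Your argument is correct and follows the same overarching strategy as the paper: recognize $\mathcal{H}$ as a finitely generated $\bz^2$-graded $\mathscr{R}(J)$-module and read off $\indeg$, $v_{\fp}$ and $v$ from the first-coordinate slices. The paper dispatches this in one line by citing \cite[Thm.~2.8]{FG24}, which packages exactly your Claim together with its application to $v_{\fp}$ and $v$; you instead reprove that result from scratch via a bihomogeneous prime filtration, which makes your proof self-contained. Your formulation $v_{\fp}(H)=\indeg\big((0:_{H}\fp)/T_{\fp}(H)\big)$ is the same as the paper's $\indeg\big(\ann_{H}(\fp)/(\ann_{H}(\fp)\cap\Gamma_{\fa}(H))\big)$ from \cite[Lem.~1.5]{FG24}: for $x\in(0:_{H}\fp)$ one has $x\in T_{\fp}(H)$ iff $\ann_{R}(x)\supsetneq\fp$ iff every associated prime of $Rx$ lies strictly above $\fp$ iff $\fa^{m}x=0$ for some $m$. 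The extra piece you supply beyond the Claim---the explicit element $y_{\delta}^{m}\xi$ forcing the slope of $v(H_n)$ down to $d_{\delta}$---is a nice direct substitute for whatever bookkeeping \cite{FG24} does internally. What your route buys is transparency (one sees exactly why a nonzerodivisor $\bar y_{j_0}$ on a domain quotient turns the bound $\indeg\ge nd_{j_0}$ into eventual linearity); what the paper's route buys is brevity.
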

    
    
\begin{proof}
    In view of \ref{para:bigrading}, $\mathcal{H}$ is a finitely generated $\bz^2$-graded module over the $\bn^2$-graded ring $\mathscr{R}(J)$, where the bigrading of $\mathscr{R}(J)$ is shown in \ref{para:bigrading}. Note that $ \mathcal{H}_{(n,*)} := \bigoplus_{l\in\mathbb{Z}}  \mathcal{H}_{(n,l)}$ is same as $H_n$ for each $n \ge 0$. Hence the proposition follows from \cite[Thm.~2.8]{FG24}.
\end{proof}

In a similar way, setting $\mathcal{H}:= \Tor_{k}^{R}\big(L, \mathscr{R}(I,M)/\mathscr{R}(I,N)\big)$, one obtains the counterpart of \Cref{prop:v-Ext-lin} for Tor modules.

\begin{proposition}\label{prop:v-Tor-lin}
    With {\rm \Cref{setup}}, fix $k \in \bn$. Set $H_n := \Tor_{k}^{R}(L, I^{n}M/I^{n}N)$ for all $n \ge 0$. Let $\fp \in \Ass_{R}(H_n)$ for all $n \gg 0$. Set $\mathcal{H} := \Tor_{k}^{R}\big(L, \mathscr{R}(I,M)/\mathscr{R}(I,N)\big)$ and $\delta := \inf \Big\{ j : 1 \le j \le c, \; y_{j} \notin \sqrt{ \ann_{\mathscr{R}(J)} (\mathcal{H}) } \Big\}$. Then, there exist $a \in \{d_{\delta},\dots, d_c \}$ and $b \in \bz$ such that
    \begin{center}
       $ v_{\fp}(H_n) = an+b$ for all $n \gg 0$.
    \end{center}
    Moreover, both the functions $\indeg(H_n)$ and $v(H_n)$ are eventually linear in $n$ with the same leading coefficient $d_{\delta} \in \{ d_1,\dots,d_c\}.$
\end{proposition}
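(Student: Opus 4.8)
The plan is to reduce \Cref{prop:v-Tor-lin} to the already-established bigraded v-number result \cite[Thm.~2.8]{FG24} in exactly the same fashion as the proof of \Cref{prop:v-Ext-lin}, only replacing Ext by Tor throughout. First I would recall from \ref{para:stability} and \ref{para:bigrading} that $\mathcal{H} := \Tor_{k}^{R}\big(L, \mathscr{R}(I,M)/\mathscr{R}(I,N)\big)$ is a finitely generated $\bz^2$-graded module over the $\bn^2$-graded ring $\mathscr{R}(J) = R_0[x_1,\dots,x_d,y_1,\dots,y_c]$, with $\deg(x_i) = (0,f_i)$ and $\deg(y_j) = (1,d_j)$; this is precisely the setting in which \cite[Thm.~2.8]{FG24} applies.

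The key observation is that the $n$th ``slice'' $\mathcal{H}_{(n,*)} := \bigoplus_{l \in \bz} \mathcal{H}_{(n,l)}$ coincides, by the very definition of the bigrading in \ref{para:bigrading} (with $I^nM := 0$ for $n < 0$, so that $\Tor_k^R(L, I^nM/I^nN) = 0$ for $n<0$), with $H_n = \Tor_k^R(L, I^nM/I^nN)$ as a $\bz$-graded $R$-module for every $n \ge 0$. The identification of the $\mathscr{R}(J)$-module structure with the natural one described in \ref{para:stability} (where the action of $a \in I^r$ is induced by the multiplication maps $N_n \xrightarrow{a} N_{n+r}$ applied to a free resolution of $L$, or rather its tensor product here) ensures that the hypotheses of \cite[Thm.~2.8]{FG24} on $\mathcal{H}$ translate into statements about the family $\{H_n\}_{n \ge 0}$. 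In particular $\Ass_R(H_n)$ being independent of $n$ for $n \gg 0$ is exactly \ref{para:stability}, and the relevant prime $\fp$ is one such stable associated prime.

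Then I would invoke \cite[Thm.~2.8]{FG24} directly: with $\delta := \inf\{ j : 1 \le j \le c,\ y_j \notin \sqrt{\ann_{\mathscr{R}(J)}(\mathcal{H})}\}$, that theorem yields $a \in \{d_\delta, \dots, d_c\}$ and $b \in \bz$ with $v_{\fp}(\mathcal{H}_{(n,*)}) = an + b$ for all $n \gg 0$, and also gives that $\indeg(\mathcal{H}_{(n,*)})$ and $v(\mathcal{H}_{(n,*)})$ are eventually linear in $n$ with common leading coefficient $d_\delta$. Rewriting $\mathcal{H}_{(n,*)} = H_n$ gives all the assertions of \Cref{prop:v-Tor-lin}.

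The only genuine point requiring care — and hence the ``main obstacle'', though it is mild — is verifying that the $\mathscr{R}(J)$-module structure on $\bigoplus_{n\ge 0} \Tor_k^R(L,N_n)$ described in \ref{para:stability} is the one that makes the bigraded slices agree with the individual Tor modules in the way \cite[Thm.~2.8]{FG24} requires; this is the Tor-analogue of the compatibility already used (without further comment) in the Ext case, and it follows because $\Tor_k^R(L,-)$ is the homology of $\mathbb{F} \otimes_R (-)$ for a finite free resolution $\mathbb{F}$ of $L$, so multiplication by $a \in I^r$ on $\mathscr{R}(I,M)/\mathscr{R}(I,N)$ induces a chain map compatible with the grading shift $(n,l) \mapsto (n+r, l + |a|)$. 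Granting this, the proof is a verbatim transcription of the proof of \Cref{prop:v-Ext-lin}.
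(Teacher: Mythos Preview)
Your proposal is correct and follows essentially the same approach as the paper: the paper states that \Cref{prop:v-Tor-lin} is obtained ``in a similar way'' to \Cref{prop:v-Ext-lin} by setting $\mathcal{H}:=\Tor_k^R\big(L,\mathscr{R}(I,M)/\mathscr{R}(I,N)\big)$, which is precisely what you do. Your extra remarks on the compatibility of the $\mathscr{R}(J)$-module structure are more explicit than the paper (which simply invokes \ref{para:bigrading}), but otherwise the arguments coincide.
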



\begin{remark}
    Note that the idea used in this section to prove the asymptotic linearity cannot be applied to prove our main results Theorems~\ref{thm:main-Ext} and \ref{thm:main-Tor} because the modules $\bigoplus_{n \ge 0}\Ext^k_R(L,M/I^n N)$ and $\bigoplus_{n \ge 0}\Tor^R_k(L, M/I^n N)$ both are not necessarily finitely generated as $\bz^2$-graded modules over the ring $\mathscr{R}(I)$.
\end{remark}

\section{Asymptotic v-numbers of graded (co)homology modules}\label{sec3}

In order to prove our main results, we need a number of lemmas.
Our first lemma in this regard is observed from the proof of \cite[Prop.~3.4]{KW04}. We also add its proof as the construction of $U$ in this lemma is used to show our main results.

\begin{lemma}\label{lem:homology}
    With $R$ and $I$ as in {\rm \Cref{setup}}, let $A\xrightarrow{\phi} B \xrightarrow{\psi} C$ be a complex of finitely generated graded $R$-modules. Suppose $A'\subseteq A$, $B'\subseteq B$ and $C' \subseteq C$ are graded submodules, which satisfy $\phi(A') \subseteq B'$ and $ \psi(B') \subseteq C'$. For each $n \in \bn$, one has the induced complex:
    \begin{align}\label{eq1.4}
       A/I^{n}A'\xrightarrow{\phi(n)} B/I^{n}B' \xrightarrow{\psi(n)} C/I^{n}C'.
    \end{align} 
    Let $H(n)$ denote the homology of \eqref{eq1.4}. Then, there exists $n_{0} \in \bn $ such that
    \begin{equation*}
        H(n) \cong (U+I^{n-n_{0}}V)/I^{n-n_{0}}W \; \text{ for all } n\ge n_{0}
    \end{equation*}
    and for some graded submodules $U$, $V$ and $W$ of a finitely generated graded $R$-module $Z$ satisfying $W \subseteq V$ and $U = \Ker(\psi)/\Image(\phi)$.
\end{lemma}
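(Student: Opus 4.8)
The goal is to express the homology $H(n)$ of the induced complex \eqref{eq1.4} in a uniform "$(U + I^{n}V)/I^{n}W$" shape for large $n$. The natural strategy is to lift everything to the Rees algebra $\mathscr{R}(I)$ and exploit finite generation there. Concretely, I would first identify $\ker(\psi(n))$ and $\operatorname{image}(\phi(n))$ as submodules of $B/I^nB'$, pull them back to submodules of $B$, and then assemble the graded families $\bigoplus_n (\text{pullback})$ into finitely generated graded modules over $\mathscr{R}(I)$.

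**Key steps.** First, observe that $\operatorname{image}(\phi(n))$ is the image of $A/I^nA' \to B/I^nB'$, which equals $(\phi(A) + I^nB')/I^nB'$; so on the level of submodules of $B$ containing $I^nB'$, the image corresponds to $\phi(A) + I^nB'$. Next, $\ker(\psi(n))$ consists of the classes $b + I^nB'$ with $\psi(b) \in I^nC'$; so its pullback to $B$ is $P_n := \psi^{-1}(I^nC')$, and one checks $\ker(\psi(n)) = P_n/I^nB'$ (using $\psi(I^nB') \subseteq I^nC'$). Now form the graded $\mathscr{R}(I)$-module $\mathcal{P} := \bigoplus_{n\ge 0} \psi^{-1}(I^nC') \subseteq \bigoplus_n B$; by the Artin–Rees lemma applied to the submodule $\psi(B') \subseteq C'$ inside $C$ (or to the map $\psi$), there is $n_0$ with $\psi^{-1}(I^nC') = B' \cap \psi^{-1}(I^nC') + \cdots$; more cleanly, Artin–Rees gives $I^nC' \cap \psi(B) = I^{n-n_0}(I^{n_0}C'\cap \psi(B))$ for $n \ge n_0$, so that $P_n = \ker(\psi) + I^{n-n_0}P_{n_0}$ for $n \ge n_0$. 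Then
\[
H(n) = \frac{P_n}{\phi(A) + I^nB'} = \frac{\ker(\psi) + I^{n-n_0}P_{n_0}}{\phi(A) + I^nB'}.
\]
Set $Z := B/I^{n_0}(\phi(A) + B') $... more carefully: work inside the finitely generated graded module $Z := B/\big(I^{n_0}\phi(A) \cap \text{(something)}\big)$; the clean choice is to let $U := \ker(\psi)/\operatorname{image}(\phi) = \ker(\psi)/\phi(A)$, $V := P_{n_0}/\phi(A)$ (or its image in an appropriate quotient), and $W := (\phi(A) + B')/\phi(A) \cap$ appropriate piece, all viewed inside $Z := P_{n_0}/\phi(A)$ after another Artin–Rees step to handle $I^nB' = I^{n-n_0}(I^{n_0}B')$. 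Then $H(n) \cong (U + I^{n-n_0}V)/I^{n-n_0}W$, with $W \subseteq V$ by construction and $U = \ker(\psi)/\operatorname{image}(\phi)$ as required.

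**Main obstacle.** The delicate point is matching numerators and denominators simultaneously: one must choose a single $n_0$ and a single ambient module $Z$ so that both $P_n = \ker(\psi) + I^{n-n_0}P_{n_0}$ and $I^nB' = I^{n-n_0}(I^{n_0}B')$ hold, and so that the quotient $P_n/(\phi(A)+I^nB')$ literally becomes $(U + I^{n-n_0}V)/I^{n-n_0}W$ inside $Z$ — not merely up to isomorphism of abstract modules but with $W \subseteq V$ and $U$ the stated homology. I expect this bookkeeping — tracking which Artin–Rees constant governs which inclusion and verifying the containment $I^{n-n_0}W \subseteq U + I^{n-n_0}V$ so the quotient makes sense — to be the main technical hurdle; the two applications of Artin–Rees (one to $\psi(B')\subseteq C'$ to control $P_n$, one to $B' \subseteq B$ to control $I^nB'$) are themselves routine. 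A secondary check is that $U$, $V$, $W$ are genuinely finitely generated graded $R$-modules, which follows since they are subquotients of the finitely generated module $P_{n_0}$ (Noetherian hypothesis on $R$).
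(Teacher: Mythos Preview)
Your approach is essentially the paper's: compute $\Ker(\psi(n)) = \psi^{-1}(I^nC')/I^nB'$, apply Artin--Rees to the pair $\Image(\psi)$, $C'$ inside $C$ to obtain $\psi^{-1}(I^nC') = \Ker(\psi) + I^{n-n_0}\psi^{-1}(I^{n_0}C')$ for $n\ge n_0$, and then pass to the quotient by $\Image(\phi)$.

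The ``main obstacle'' you flag, however, is a phantom. No second Artin--Rees step is needed for the denominator: the identity $I^nB' = I^{n-n_0}(I^{n_0}B')$ is immediate from the semigroup law $I^n = I^{n-n_0}I^{n_0}$. With $V' := \psi^{-1}(I^{n_0}C')$ and $W' := I^{n_0}B'$ one has $W'\subseteq V'$ (since $\psi(B')\subseteq C'$) and
\[
H(n) \;\cong\; \frac{\Ker(\psi)+I^{n-n_0}V'}{\Image(\phi)+I^{n-n_0}W'}
\;\cong\; \frac{U+I^{n-n_0}V}{I^{n-n_0}W},
\]
where $Z := B/\Image(\phi)$, $U := \Ker(\psi)/\Image(\phi)$, $V := (V'+\Image(\phi))/\Image(\phi)$, and $W := (W'+\Image(\phi))/\Image(\phi)$. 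Your tentative choices $Z = P_{n_0}/\phi(A)$ etc.\ would also work (since $\Image(\phi)\subseteq\Ker(\psi)\subseteq P_{n_0}$ and $I^{n_0}B'\subseteq P_{n_0}$), but taking $Z = B/\Image(\phi)$ avoids all the hedging in your write-up.
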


\begin{proof}
    Note that $C'$ and $\Image(\psi)$ (i.e., image of $\psi$) both are submodules of $C$. So, by the Artin-Rees lemma (cf.~\cite[17.1.6]{SH06}), there exists $n_{0} \in \bn$ such that
    $$ I^{n}C' \cap \Image(\psi) = I^{n-n_{0}}(I^{n_{0}}C' \cap \Image(\psi)) \; \mbox{ for all } n \ge n_{0}.$$
    Then, for each $n \ge n_{0}$, it follows that $\psi^{-1}(I^nC')\subseteq \Ker(\psi) + I^{n-n_{0}}\psi^{-1}(I^{n_{0}}C')$, where $\Ker(\psi)$ denotes the kernel of $\psi$. Consequently, for all $n \ge n_{0}$, $\psi^{-1}(I^nC') = \Ker(\psi) + I^{n-n_{0}}\psi^{-1}(I^{n_{0}}C')$.
    This yields that
    \begin{align}\label{ker-psi}
        \Ker(\psi(n)) &= \psi^{-1}(I^nC')/I^nB' \\
        &= \big(\Ker(\psi) + I^{n-n_{0}}\psi^{-1}(I^{n_{0}}C')\big)/I^{n}B' \; \mbox{ for all } n \ge n_{0}.\nonumber
    \end{align}
    On the other hand,
    \begin{equation}\label{im-phi}
         \Image(\phi(n))  = (\Image(\phi) + I^{n}B')/I^{n}B' \; \mbox{ for all } n.
    \end{equation}
    Set $V' := \psi^{-1}(I^{n_{0}}C')$ and $ W' := I^{n_{0}}B'$. Then $W'\subseteq V'$ as $ \psi(B') \subseteq C'$. Thus, for each $n \ge n_{0}$, the homology of \eqref{eq1.4} is given by
    \begin{align*} 
        H(n) & = \frac{\Ker(\psi(n))}{\Image(\phi(n))} \cong \frac{\Ker(\psi) + I^{n-n_{0}}V'}{\Image(\phi)+ I^{n-n_{0}}W'} \quad\mbox{[by \eqref{ker-psi} and \eqref{im-phi}]}\\
        & \cong \frac{\Ker(\psi)/\Image(\phi)+\big(\Image(\phi) + I^{n-n_0}V'\big)/\Image(\phi)}{\big(\Image(\phi) + I^{n-n_0}W'\big)/\Image(\phi)} = \frac{U+I^{n-n_{0}}V}{I^{n-n_{0}}W},
    \end{align*}
    where
        $U := \Ker(\psi)/\Image(\phi)$, $V := (V' +\Image(\phi))/\Image(\phi)$ and $W := (W' +\Image(\phi))/ \Image(\phi)$.
    Set $Z := B/\Image(\phi)$. Clearly, $Z$ is a finitely generated graded $R$-module that contains $U$, $V$ and $W$ as graded submodules. Note that $W\subseteq V$ as $W'\subseteq V'$.
\end{proof}

\begin{remark}
    It can be observed that \Cref{lem:homology} also holds true in non-graded setup as the same proof works in general setup.
\end{remark}

As consequences of \Cref{lem:homology}, $\Ext_{R}^{k}(L, M/I^{n}N)$ and $\Tor_k^R(L, M/I^{n}N)$ can be expressed as quotients of graded modules involving powers of $I$ as shown below.

\begin{lemma}\label{lem:ext:interpret}
    With {\rm \Cref{setup}}, fix $k \in \bn$. Then, there exists $q\in \bn$ such that
    $$ \Ext_{R}^{k}(L, M/I^{n}N) \cong (U+I^{n-q}V)/I^{n-q}W \; \text{ for all } n \ge q$$
    and for some graded submodules $U$, $V$ and $W$ of a finitely generated graded $R$-module $Z$ satisfying $W \subseteq V$ and $U\cong \Ext_R^k(L,M)$.
\end{lemma}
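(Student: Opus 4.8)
The statement should follow by feeding the truncated $\Hom$-complex that computes $\Ext_R^k(L,-)$ into \Cref{lem:homology}. Concretely, since $R$ is Noetherian and $L$ is finitely generated, choose a graded free resolution $\mathbb{F}\colon \cdots \to F_{k+1}\xrightarrow{\partial_{k+1}} F_k \xrightarrow{\partial_k} F_{k-1}\to\cdots$ of $L$ by finitely generated graded free $R$-modules. For any graded module $X$, $\Ext_R^k(L,X)$ is the homology at the middle term of the complex $\Hom_R(F_{k-1},X)\to\Hom_R(F_k,X)\to\Hom_R(F_{k+1},X)$ (with $F_{-1}=0$ when $k=0$). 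The plan is to apply \Cref{lem:homology} to
$$A\xrightarrow{\phi} B\xrightarrow{\psi} C := \Hom_R(F_{k-1},M)\xrightarrow{\Hom(\partial_k,M)} \Hom_R(F_k,M)\xrightarrow{\Hom(\partial_{k+1},M)} \Hom_R(F_{k+1},M),$$
with the graded submodules $A':=\Hom_R(F_{k-1},N)$, $B':=\Hom_R(F_k,N)$, $C':=\Hom_R(F_{k+1},N)$, which are indeed submodules of $A,B,C$ respectively because $N\subseteq M$ and $\Hom_R(F_i,-)$ is left exact. Naturality of the maps $\Hom(\partial_i,-)$ gives $\phi(A')\subseteq B'$ and $\psi(B')\subseteq C'$.

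The point that makes this work is the identification of the induced complex of \Cref{lem:homology} with the $\Hom$-complex computing $\Ext_R^k(L,M/I^nN)$. Writing each $F_i$ as a finite direct sum of shifted copies of $R$, one has $\Hom_R(F_i,X)\cong\bigoplus_j X(a_{ij})$ functorially; hence $\Hom_R(F_i,-)$ is exact, so applying it to $0\to I^nN\to M\to M/I^nN\to 0$ yields $\Hom_R(F_i,M/I^nN)\cong\Hom_R(F_i,M)/\Hom_R(F_i,I^nN)$, and moreover $\Hom_R(F_i,I^nN)=\bigoplus_j I^n\big(N(a_{ij})\big)=I^n\,\Hom_R(F_i,N)=I^nA'$ (resp. $I^nB'$, $I^nC'$), compatibly with the inclusion $\Hom_R(F_i,N)\subseteq\Hom_R(F_i,M)$. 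Therefore the complex $A/I^nA'\xrightarrow{\phi(n)} B/I^nB'\xrightarrow{\psi(n)} C/I^nC'$ of \Cref{lem:homology} is exactly $\Hom_R(F_{k-1},M/I^nN)\to\Hom_R(F_k,M/I^nN)\to\Hom_R(F_{k+1},M/I^nN)$, so its homology is $H(n)\cong\Ext_R^k(L,M/I^nN)$.

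Now \Cref{lem:homology} provides $n_0\in\bn$ and graded submodules $U,V,W$ of a finitely generated graded module $Z$ with $W\subseteq V$, $U=\Ker(\psi)/\Image(\phi)$, and $\Ext_R^k(L,M/I^nN)\cong H(n)\cong (U+I^{n-n_0}V)/I^{n-n_0}W$ for all $n\ge n_0$. It remains only to observe that $U=\Ker(\psi)/\Image(\phi)$ is the homology at the middle term of $\Hom_R(F_{k-1},M)\to\Hom_R(F_k,M)\to\Hom_R(F_{k+1},M)$, which is precisely $\Ext_R^k(L,M)$. Setting $q:=n_0$ finishes the argument. I do not anticipate a serious obstacle here; the only things requiring a little care are the bookkeeping identities $\Hom_R(F_i,I^nN)=I^n\Hom_R(F_i,N)$ and the matching of the connecting maps, both of which are immediate from the finite freeness of the $F_i$, and confirming the case $k=0$ (where $A=A'=0$) is covered by the same argument.
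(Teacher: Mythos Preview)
Your proof is correct and follows essentially the same approach as the paper: both take the relevant three-term piece of $\Hom_R(\mathbb{F}_L,-)$, identify the induced complex of \Cref{lem:homology} with the complex computing $\Ext_R^k(L,M/I^nN)$, and read off $U=\Ker(\psi)/\Image(\phi)\cong\Ext_R^k(L,M)$. You simply make explicit the bookkeeping (e.g., $\Hom_R(F_i,I^nN)=I^n\Hom_R(F_i,N)$ and the $k=0$ case) that the paper leaves implicit.
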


\begin{proof}
    Let $(\alpha) : F_{k+1} \to F_{k} \to F_{k-1}$ be the part of a graded free resolution of $L$ over $R$ consisting of finitely generated free $R$-modules. Dualizing $(\alpha)$ with respect to $M$, one obtains a complex of the form $A\to B \to C$, where each of $A$, $B$ and $C$ is isomorphic to a direct sum of some finite copies of $M$. Moreover, the homology of this complex is nothing but $\Ext_R^k(L,M)$. Now, for each $n \ge 0$, dualizing $(\alpha)$ with respect to $M/I^nN$, one gets a complex of the form $A/I^nA' \to B/I^nB' \to C/I^nC'$ for some graded submodules $A' \subseteq A$, $B' \subseteq B $ and $C' \subseteq C$. Its homology is $\Ext_{R}^{k}(L, {M}/{I^{n}N})$. Hence the desired result follows from \Cref{lem:homology}.
\end{proof}

Tensoring $(\alpha)$ with $M$ and $M/I^nN$ over $R$ respectively (instead of dualizing), by a similar argument as in the proof of \Cref{lem:ext:interpret}, one obtains the following.

\begin{lemma}\label{lem:tor:interpret}
    With {\rm \Cref{setup}}, fix $k \in \bn$. Then, there exists $q\in \bn$ such that
    $$ \Tor_k^R(L, M/I^{n}N) \cong (U+I^{n-q}V)/I^{n-q}W \; \text{ for all } n \ge q$$
    and for some graded submodules $U$, $V$ and $W$ of a finitely generated graded $R$-module $Z$ satisfying $W \subseteq V$ and $U\cong \Tor_k^R(L,M)$.
\end{lemma}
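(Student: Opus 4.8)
The plan is to carry out, for $\Tor$, the exact analogue of the proof of \Cref{lem:ext:interpret}: replace the functor $\Hom_R(-,?)$ by $-\otimes_R?$, produce a three-term complex of the shape demanded by \Cref{lem:homology}, and then invoke \Cref{lem:homology}. Concretely, I would first fix a graded free resolution $\mathbb{F}\colon \cdots \to F_{k+1}\xrightarrow{d_{k+1}} F_k \xrightarrow{d_k} F_{k-1}\to\cdots$ of $L$ by finitely generated graded free $R$-modules (with the convention $F_{-1}:=0$, so that the case $k=0$ is covered), and isolate the piece $(\alpha)\colon F_{k+1}\to F_k\to F_{k-1}$. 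Tensoring $(\alpha)$ with $M$ over $R$ yields a complex $A\xrightarrow{\phi}B\xrightarrow{\psi}C$ of finitely generated graded $R$-modules, where $A=F_{k+1}\otimes_R M$, $B=F_k\otimes_R M$, $C=F_{k-1}\otimes_R M$ are, up to grading shifts, direct sums of finitely many copies of $M$; by the definition of $\Tor$, its middle homology $\Ker(\psi)/\Image(\phi)$ is $\Tor_k^R(L,M)$.

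Next I would tensor $(\alpha)$ with $M/I^nN$ and recognize the result as an induced complex of the form \eqref{eq1.4}. Set $A':=F_{k+1}\otimes_R N$, $B':=F_k\otimes_R N$, $C':=F_{k-1}\otimes_R N$. Because each $F_i$ is free, hence flat, tensoring the exact sequence $0\to N\to M\to M/N\to 0$ with $F_i$ shows that $A'\subseteq A$, $B'\subseteq B$, $C'\subseteq C$ are graded submodules; and tensoring $0\to I^nN\to M\to M/I^nN\to 0$ with $F_i$ shows $F_i\otimes_R(M/I^nN)\cong (F_i\otimes_R M)/(F_i\otimes_R I^nN)$. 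Since $F_i$ is free of finite rank, one also has $F_i\otimes_R I^nN = I^n(F_i\otimes_R N)$ inside $F_i\otimes_R M$. Hence the complex obtained by tensoring $(\alpha)$ with $M/I^nN$ is precisely $A/I^nA'\xrightarrow{\phi(n)}B/I^nB'\xrightarrow{\psi(n)}C/I^nC'$. Moreover, since the differentials $d_k,d_{k+1}$ of $\mathbb{F}$ are $R$-linear and $N\subseteq M$, functoriality of $d_i\otimes -$ gives $\phi(A')\subseteq B'$ and $\psi(B')\subseteq C'$, so the hypotheses of \Cref{lem:homology} are satisfied.

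Finally, the middle homology of $A/I^nA'\to B/I^nB'\to C/I^nC'$ is, by definition, $\Tor_k^R(L,M/I^nN)$. Applying \Cref{lem:homology} therefore produces $q:=n_0\in\bn$, a finitely generated graded $R$-module $Z:=B/\Image(\phi)$, and graded submodules $W\subseteq V$ and $U$ of $Z$ with $U=\Ker(\psi)/\Image(\phi)\cong \Tor_k^R(L,M)$, such that $\Tor_k^R(L,M/I^nN)\cong (U+I^{n-q}V)/I^{n-q}W$ for all $n\ge q$; this is exactly the assertion. The only step requiring genuine care — and the one I would write out in full — is the identification of the tensored complex with one of the form \eqref{eq1.4}, i.e.\ checking $F_i\otimes_R I^nN = I^n(F_i\otimes_R N)$ and the compatibility of $\phi,\psi$ with the submodules $A',B',C'$; beyond that bookkeeping there is no real obstacle, since all of the asymptotic content is already packaged in \Cref{lem:homology}.
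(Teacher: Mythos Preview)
Your proposal is correct and follows exactly the approach indicated in the paper: tensor the three-term piece $(\alpha)$ of a graded free resolution of $L$ with $M$ and with $M/I^nN$, identify the latter complex with one of the form \eqref{eq1.4} via $F_i\otimes_R I^nN = I^n(F_i\otimes_R N)$, and then invoke \Cref{lem:homology}. The additional bookkeeping you spell out (flatness of the $F_i$, the convention $F_{-1}=0$ for $k=0$, and the compatibility $\phi(A')\subseteq B'$, $\psi(B')\subseteq C'$) is exactly what the paper leaves implicit in the phrase ``by a similar argument as in the proof of \Cref{lem:ext:interpret}.''
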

In view of Lemmas~\ref{lem:ext:interpret} and \ref{lem:tor:interpret}, we need to analyze the behaviour of the local v-number of $(U+I^nV)/I^nW$ as a function of $n$. So, in \Cref{thm:UVW}, we study the asymptotic behaviour of this function under the condition that $ ( 0 :_{U} I ) = 0 $. Our next two lemmas and \Cref{thm:UVW} are shown with the following setup.

\begin{setup}\label{setup-2}
    Let $R$, $I$ and $J$ be as in \Cref{setup}. Suppose $U$, $V$ and $W$ are graded submodules of a finitely generated $\bz$-graded $R$-module $Z$ such that $W \subseteq V$.
\end{setup}


\begin{lemma}\label{lem:ann}
    With {\rm \Cref{setup-2}}, denote $M_n := (U+V)/I^nW $ and $N_n := (U+I^nV)/I^nW$ for each $ n \ge 0$. Assume that $(0:_{U+V} I) = 0$. Suppose $\fa$ and $\fu$ be homogeneous ideals of $R$ such that $I \subseteq \fu$. Set $M'_n := \ann_{M_n}(\fu)$ and $N'_n := \ann_{N_n}(\fu)$. Then, for all $n \gg 0$,
    $$ \frac{M'_n}{M'_n \bigcap \Gamma_{\fa} \left(M_n \right) } = \frac{N'_n}{N'_n \bigcap \Gamma_{\fa} \left(N_n\right) }.$$
\end{lemma}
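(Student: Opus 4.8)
The plan is to bundle the modules $M_n$ and $N_n$ into finitely generated graded modules over the Rees ring $\mathscr{R}(J)=R[Y_1,\dots,Y_c]$ (where $Y_j$ has degree $1$ and corresponds to the generator $y_j$ of $J$), and then to reduce the statement to the vanishing in all large degrees of a single finitely generated graded $\mathscr{R}(J)$-module. Write $P:=U+V$. As in \ref{para:stability}, since $J$ is a reduction of $I$ the Rees modules $\mathscr{R}(I,W)$ and $\mathscr{R}(I,V)$ are finitely generated graded $\mathscr{R}(J)$-modules, and therefore so are
\[
\mathcal{M}:=\bigoplus_{n\ge0}M_n=\Big(\bigoplus_{n\ge0}P\Big)\Big/\mathscr{R}(I,W),
\qquad
\mathcal{N}:=\bigoplus_{n\ge0}N_n=\Big(\Big(\bigoplus_{n\ge0}U\Big)+\mathscr{R}(I,V)\Big)\Big/\mathscr{R}(I,W),
\]
where $\bigoplus_nP$ and $\bigoplus_nU$ denote $P$ and $U$ placed in every degree with $Y_j$ acting by multiplication by $y_j$. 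Then $\mathcal{N}\subseteq\mathcal{M}$, with degree-$n$ components $N_n\subseteq M_n$.

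Since $\mathscr{R}(J)$ is Noetherian, $\mathcal{M}':=\ann_{\mathcal{M}}(\fu)$ and $\mathcal{N}':=\ann_{\mathcal{N}}(\fu)$ are finitely generated graded $\mathscr{R}(J)$-modules with degree-$n$ components $M'_n$ and $N'_n$, and $\Gamma_{\fa\mathscr{R}(J)}(\mathcal{M})$, $\Gamma_{\fa\mathscr{R}(J)}(\mathcal{N})$ have degree-$n$ components $\Gamma_{\fa}(M_n)$, $\Gamma_{\fa}(N_n)$. Hence $\overline{\mathcal{M}'}:=\mathcal{M}'/\big(\mathcal{M}'\cap\Gamma_{\fa\mathscr{R}(J)}(\mathcal{M})\big)$ and $\overline{\mathcal{N}'}:=\mathcal{N}'/\big(\mathcal{N}'\cap\Gamma_{\fa\mathscr{R}(J)}(\mathcal{N})\big)$ are finitely generated graded $\mathscr{R}(J)$-modules whose degree-$n$ components are, respectively, the left- and right-hand sides of the claimed identity. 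Because $N_n\subseteq M_n$ forces $\Gamma_{\fa}(N_n)=N_n\cap\Gamma_{\fa}(M_n)$, the kernel of $N'_n\to M'_n/(M'_n\cap\Gamma_{\fa}(M_n))$ equals $N'_n\cap\Gamma_{\fa}(M_n)=N'_n\cap\Gamma_{\fa}(N_n)$, so the natural graded map $\overline{\mathcal{N}'}\to\overline{\mathcal{M}'}$ is injective; let $\mathcal{C}$ be its cokernel, a finitely generated graded $\mathscr{R}(J)$-module. The lemma is precisely the assertion that $\mathcal{C}_n=0$ for $n\gg0$. As $\mathscr{R}(J)=R[Y_1,\dots,Y_c]$ is generated over $\mathscr{R}(J)_0=R$ in degree one by $Y_1,\dots,Y_c$, a finitely generated graded $\mathscr{R}(J)$-module has all large graded pieces zero if and only if each $Y_\ell$ acts nilpotently on it, equivalently if its localization at each single element $Y_\ell$ is zero. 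Thus it suffices to prove, for every $\ell\in\{1,\dots,c\}$, that $\mathcal{C}_{Y_\ell}=0$; equivalently, since $\overline{\mathcal{N}'}_{Y_\ell}\hookrightarrow\overline{\mathcal{M}'}_{Y_\ell}$ by exactness of localization, that $\overline{\mathcal{N}'}_{Y_\ell}=\overline{\mathcal{M}'}_{Y_\ell}$.

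Now fix $\ell$ and pass to the localization at $Y_\ell$. Localization is exact and commutes with $\ann_{(-)}(\fu)$ and with $\Gamma_{\fa\mathscr{R}(J)}(-)$ on finitely generated modules, and inverting $Y_\ell$ makes the $\mathscr{R}(J)$-grading periodic, so the desired equality can be checked in degree zero. There one has $(\mathcal{M}_{Y_\ell})_0=D:=\varinjlim_n\big(P/I^nW\xrightarrow{\ \cdot y_\ell\ }P/I^{n+1}W\big)$, $(\mathcal{N}_{Y_\ell})_0=E:=\varinjlim_n\,(U+I^nV)/I^nW\subseteq D$, and $\big(\Gamma_{\fa\mathscr{R}(J)}(\mathcal{M})_{Y_\ell}\big)_0=\Gamma_{\fa}(D)$, and the goal becomes the containment $\ann_D(\fu)\subseteq E+\Gamma_{\fa}(D)$. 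This is where the hypothesis $(0:_{U+V}I)=0$, i.e.\ that $P=U+V$ is $I$-torsion free, enters. Given $x\in\ann_D(\fu)$: since $\fu$ is finitely generated and $I\subseteq\fu$, one may, after multiplying by a power of $y_\ell$, represent $x$ by the class of some $z\in P$ with $\fu z\subseteq I^mW$; writing $z=u+v$ with $u\in U$ and $v\in V$, the class of $u$ already lies in $E$, and one shows, using the Artin--Rees lemma together with the $I$-torsion freeness of $P$ (to cancel powers of $y_\ell$ when comparing $I^nW$ inside $I^nV$), that the class of $v$ is annihilated by a power of $\fa$ in $D$. Hence $x\in E+\Gamma_{\fa}(D)$, and the proof is complete. (It may be useful to record along the way that $\mathcal{C}$ is a subquotient of $\mathcal{M}/\mathcal{N}\cong\bigoplus_n\bar V/I^n\bar V$, where $\bar V:=V/(U\cap V)$, which makes the finiteness statements transparent.)

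The step I expect to be the main obstacle is the last one: verifying that, after inverting the reduction generator $Y_\ell$, the $V$-component of a $\fu$-annihilated element of $D$ becomes $\fa$-power torsion. This is exactly where the torsion-freeness assumption on $U+V$ is indispensable, and it comes down to a careful Artin--Rees estimate inside $P$ relating $(I^nW:_P\fu)$ to $U+I^nV$ modulo $\fa$-torsion. The remaining bookkeeping --- that $\mathcal{M},\mathcal{N},\mathcal{M}',\mathcal{N}'$ and the quotients $\overline{\mathcal{M}'},\overline{\mathcal{N}'}$ are finitely generated graded $\mathscr{R}(J)$-modules with the indicated components, and that localization and the $\Gamma$-functors behave as asserted --- is routine.
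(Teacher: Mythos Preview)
There is a genuine gap in the finiteness claims. The modules $\mathcal{M}=\bigoplus_{n\ge 0}(U+V)/I^nW$ and $\mathcal{N}=\bigoplus_{n\ge 0}(U+I^nV)/I^nW$ are \emph{not} finitely generated over $\mathscr{R}(J)$ in general: the ``constant'' module $\bigoplus_{n\ge 0}P$ (with $P=U+V$) is finitely generated only when $JP=P$, and your parenthetical fallback $\mathcal{M}/\mathcal{N}\cong\bigoplus_{n}\bar V/I^n\bar V$ is likewise not finitely generated (take $R=k[x]$, $I=(x)$, $\bar V=R$: the $n$th piece has $k$-dimension $n$, while anything generated in degrees $\le n_0$ contributes at most $n_0$ dimensions in degree $n$). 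Consequently the passage ``Since $\mathscr{R}(J)$ is Noetherian, $\mathcal{M}'$ and $\mathcal{N}'$ are finitely generated\ldots'' is unjustified, and with it the reduction ``$\mathcal{C}_n=0$ for $n\gg 0$ $\Leftrightarrow$ $\mathcal{C}_{Y_\ell}=0$ for all $\ell$'' collapses, since that equivalence needs finite generation. The final localization step is also only sketched: you assert that the $V$-component of a $\fu$-annihilated element becomes $\fa$-torsion in $D$, but no argument is given, and it is not clear how $\fa$ enters at all at that point.

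The paper's proof avoids all of this by establishing the stronger equality $M'_n=N'_n$ directly. The single substantive step is the chain
\[
(I^{n+1}W:_{U+I^{n+1}V}\fu)\subseteq (I^{n+1}W:_{P}\fu)\subseteq (I^{n+1}P:_{P}I)=I^nP\quad(n\gg 0),
\]
the last equality being Brodmann's lemma, valid because $(0:_PI)=0$. Intersecting back with $U+I^{n+1}V$ gives $N'_{n+1}=\ann_{I^nP/I^{n+1}W}(\fu)$, and the same module equals $M'_{n+1}$ by \cite[(2.6)]{FG24}; the $\Gamma_{\fa}$-statement then follows formally. Note that this very inclusion $(I^{n+1}W:_P\fu)\subseteq I^nP$ is exactly what would be needed to salvage your approach (it shows $\mathcal{M}'$ eventually sits inside the genuinely finitely generated $\mathscr{R}(J)$-module $\bigoplus_n I^nP/I^{n+1}W$), so the Rees-module packaging adds length without bypassing the key estimate.
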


    
\begin{proof}
    Observe that 
    \begin{align*}
        \left(I^{n+1}W :_{(U+I^{n+1}V)} \fu\right) 
        &\subseteq \left(I^{n+1}W :_{(U+V)} \fu\right) 
            \subseteq \left(I^{n+1}(U+V) :_{(U+V)} \fu\right) \\ 
        & \subseteq  \left(I^{n+1}(U+V) :_{(U+V)} I\right) = I^{n}(U+V) \quad  \text{for all } n \gg 0, 
    \end{align*} 
    where the last equality follows from \cite[Lem.~(4)]{Br79} as $(0:_{U+V} I) = 0$. So
    \begin{align*}
    \left(I^{n+1}W :_{(U+I^{n+1}V)} \fu\right) &= \left(I^{n+1}W :_{(U+I^{n+1}V)} \fu\right) \cap I^{n}(U+V) \\
    &=  \left(I^{n+1}W :_{I^{n}(U+V)} \fu\right) \quad  \text{for all } n \gg 0.
    \end{align*}
    Going modulo $I^{n+1}W$ both sides, as graded submodules of $(U+V)/I^{n+1}W$, one obtains that $\ann_{\frac{U+I^{n+1}V}{I^{n+1}W}}(\fu) = \ann_{\frac{I^{n}(U+V)}{I^{n+1}W}}(\fu)$ for all $n \gg 0$. On the other hand, in view of \cite[Equ.~(2.6)]{FG24}, one has that $\ann_{\frac{U+V}{I^{n+1}W}}(\fu) = \ann_{\frac{I^{n}(U+V)}{I^{n+1}W}}(\fu)$ for all $n \gg 0$. Combining both the equalities, it follows that
    \begin{align}\label{equ:numerator}
        \ann_{\frac{U+V}{I^{n+1}W}}(\fu) = \ann_{\frac{U+I^{n+1}V}{I^{n+1}W}}(\fu) \text{ for all } n \gg 0.
    \end{align} 
    As 
    $ \frac{U+I^{n+1}V}{I^{n+1}W} \bigcap \Gamma_{\fa} \left(\frac{U+V}{I^{n+1}W}\right) = \Gamma_{\fa} \left(\frac{U+I^{n+1}V}{I^{n+1}W}\right) $, the equalities in \eqref{equ:numerator} yield that
    \begin{align}\label{equ:denominator}
        \ann_{\frac{U+V}{I^{n+1}W}}(\fu) \bigcap \Gamma_{\fa} \left(\frac{U+V}{I^{n+1}W}\right)
        & = \ann_{\frac{U+I^{n+1}V}{I^{n+1}W}}(\fu) \bigcap \Gamma_{\fa}\left(\frac{U+V}{I^{n+1}W}\right) \\
        & = \ann_{\frac{U+I^{n+1}V}{I^{n+1}W}}(\fu) \bigcap \Gamma_{\fa}\left(\frac{U+I^{n+1}V}{I^{n+1}W}\right) \nonumber
    \end{align}
    for all $ n \gg 0$. Hence, the desired equalities follow from \eqref{equ:numerator} and \eqref{equ:denominator}.  
\end{proof}

The following is a consequence of Artin-Rees lemma.

\begin{lemma}\label{lem:colon}
    With {\rm \Cref{setup-2}}, there exists an integer $q \ge 0$ such that 
    $$ (0:_{U+I^{n}V} I) = (0:_{U} I) \text{ for all } n \ge q.$$
\end{lemma}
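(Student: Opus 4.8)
The plan is to show that the submodules $(0:_{U+I^nV}I)$ of the finitely generated graded $R$-module $Z$ form a non-increasing chain for $n \gg 0$ (in fact a descending chain once we pass the Artin--Rees constant), hence stabilize, and that the stable value is exactly $(0:_U I)$. First I would note the trivial inclusions $(0:_U I) \subseteq (0:_{U+I^nV}I)$ for every $n$, and $(0:_{U+I^{n+1}V}I) \subseteq (0:_{U+I^nV}I)$ whenever $I^{n+1}V \subseteq I^nV$, i.e.\ for all $n$; so $\bigl\{(0:_{U+I^nV}I)\bigr\}_{n\ge 0}$ is already a descending chain of submodules of the Noetherian module $Z$, and therefore it stabilizes: there is $q$ with $(0:_{U+I^nV}I) = (0:_{U+I^qV}I)$ for all $n \ge q$. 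It remains to identify this stable value with $(0:_U I)$.

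For the nontrivial inclusion, take $z \in (0:_{U+I^nV}I)$ for all $n \ge q$; write $z = u + v_n$ with $u$ fixed (independent of $n$ — this is the point) and $v_n \in I^nV$. Here I would invoke the Artin--Rees lemma: applied to the submodule $U \cap V \subseteq V$ inside the finitely generated module $V$, there is $n_0$ such that $I^nV \cap U = I^{n-n_0}(I^{n_0}V \cap U) \subseteq I^{n-n_0}U$ for all $n \ge n_0$. Since $z$ has a fixed decomposition-free description (it lies in every $U + I^nV$ simultaneously, being in the stable colon), the element $v_n = z - u$ lies in $I^nV$; but $v_n = z - u \in U + I^nV$ and, choosing the representative carefully, $z-u' \in I^nV$ for one fixed $u' \in U$, so $z - u' \in U \cap I^nV \subseteq I^{n-n_0}U$ for all large $n$. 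Then $Iz \subseteq I(u' + I^{n-n_0}U)$, and $Iz = 0$ forces $Iu' \in I^{n-n_0+1}U$, i.e.\ $Iu' \in \bigcap_{m\ge 1} I^m U$; by Krull's intersection theorem (or Nakayama on the relevant finitely generated module, using the graded hypothesis so that $\bigcap_m I^m U = 0$), we get $Iu' = 0$, hence $z = u' \in (0:_U I)$.

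The main obstacle — and the step requiring care — is making the phrase ``$z$ has a fixed representative $u' \in U$'' precise: a priori the decomposition $z = u + v_n$ depends on $n$, and $U \cap I^nV$ need not be zero, so one cannot naively ``take $n \to \infty$'' on the $U$-component. The clean way around this is to work with the stabilized colon directly: for $n \ge q$ we have $(0:_{U+I^nV}I) = (0:_{U+I^qV}I)$, and for $z$ in this common module, $z \in U + I^qV$ gives one fixed representative $z = u' + w$ with $w \in I^qV$; then $Iw = Iz - Iu' = -Iu' \subseteq U$ (since $Iz=0$), so $Iw \in (IV) \cap U$, and iterating with Artin--Rees on $I^mw \in I^{m-1}(IV)\cap U \subseteq \cdots$ shows $Iw \in \bigcap_m I^mU = 0$, whence $Iu' = 0$ and $z = u' \in (0:_U I)$. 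I would double-check the Artin--Rees bookkeeping and the appeal to the graded Krull intersection theorem, but both are standard given \Cref{setup}.
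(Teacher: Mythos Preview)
Your argument has two genuine gaps. First, the ``iteration'' in your second attempt does not yield what you claim. From the fixed decomposition $z=u'+w$ with $w\in I^{q}V$ you correctly get $I^{m}w\subseteq I^{m+q}V\cap U$, and Artin--Rees then gives $I^{m}w\subseteq I^{m+q-n_{0}}U$; but this is a statement about $I^{m}w$, not about $Iw$, and there is no way to deduce $Iw\subseteq I^{m}U$ for arbitrary $m$ from it. (Your first attempt has a related slip: you write ``$z-u'\in U\cap I^{n}V$'', but $z-u'$ lies only in $I^{n}V$, not in $U$; getting it into $U$ is precisely what you are trying to prove.) Second, even if you reached $Iu'\in\bigcap_{m}I^{m}U$, the conclusion $\bigcap_{m}I^{m}U=0$ is not available in the generality of \Cref{setup-2}: the homogeneous ideal $I$ may have nonzero degree-$0$ part over an arbitrary Noetherian $R_{0}$, and then the graded Krull intersection theorem does not force this intersection to vanish.

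There is a clean fix along your lines: since the descending chain stabilizes, $z\in U+I^{n}V$ for all $n\ge q$, so $\bar z\in\bigcap_{n}I^{n}\bar V$ in $Z/U$; Artin--Rees gives $I\cdot\bigl(\bigcap_{n}I^{n}\bar V\bigr)=\bigcap_{n}I^{n}\bar V$, hence by the determinant trick $(1-a)\bar z=0$ for some $a\in I$, and then $az=0$ (because $Iz=0$) forces $z\in U$. The paper avoids this detour entirely with a sharper use of Artin--Rees: set $T:=(0:_{Z}I)$ and apply Artin--Rees to $(U+T)\cap I^{n}V$. For $x\in(0:_{U+I^{n}V}I)$ one has $x\in T$, so writing $x=u+v$ gives $v=x-u\in(U+T)\cap I^{n}V\subseteq I^{n-n_{0}}(U+T)$; the key point is that $IT=0$, so $I^{n-n_{0}}(U+T)=I^{n-n_{0}}U\subseteq U$ once $n>n_{0}$, whence $x=u+v\in U\cap T=(0:_{U}I)$ in one stroke, with no intersection theorem needed.
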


\begin{proof}
    Set $T := (0:_{Z} I)$. Then, by Artin-Rees lemma (cf.~\cite[17.1.6]{SH06}), there exists $n_{0} \in \bn $ such that $I^{n}V \cap (U+T) = I^{n-n_{0}}\big(I^{n_0}V \cap (U+T)\big)$ for all $n \ge n_{0}$. Let $q = n_{0}+1$. Consider $n \ge q$, and an element $x \in (0:_{U+I^{n}V} I) $. Then $x\in T$, and $x=u+v$ for some $u \in U$ and $v \in I^{n}V$. Hence $v=-u+x \in I^{n}V \cap (U+T) =  I^{n-n_0}\big(I^{n_0}V \cap (U+T)\big) \subseteq I^{n-n_0}(U+T) = I^{n-n_0}U $ as $I T = 0$ and $n\ge q > n_0$. Thus $v\in U$. It follows that $x=u+v \in T \cap U = (0:_{Z} I)\cap U = (0:_{U} I) $. So $(0:_{U+I^{n}V} I) \subseteq (0:_{U} I)$ for all $n \ge q$. Since $(0:_{U} I) \subseteq (0:_{U+I^{n}V} I)$ for all $n \in \bn$, the desired equalities follow.
\end{proof}

It is known that the set $\Ass_{R}\big((U+I^nV)/I^nW\big)$ stabilizes for all $n \gg 0$, cf.~\cite[Prop.~3.4 and its proof]{KW04}. We prove that the local v-number of $(U+I^nV)/I^nW$ is eventually linear as a function of $n$ provided $(0:_{U}I) = 0$. This result considerably strengthens \cite[Thm.~1.1]{Co24} and \cite[Thm.~1.9.(1)]{FG24}.

\begin{theorem}\label{thm:UVW}
    With {\rm \Cref{setup-2}}, let $(0:_{U}I) = 0$. Let $\fp \in \Ass_{R}\big((U+I^nV)/I^nW\big)$ for all $n \gg 0$. Assume that $I \subseteq \fp$. Then, there exist $a \in \{ d_1,\dots,d_c\}$ and $b \in \bz$ such that $v_{\fp}\big((U+I^nV)/I^nW\big) = an+b$ for all $n \gg 0$.
\end{theorem}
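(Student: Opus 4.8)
The plan is to reduce the computation of $v_{\fp}\big((U+I^nV)/I^nW\big)$ to a statement about a \emph{single} finitely generated bigraded module over $\mathscr{R}(J)$, where the machinery behind \Cref{prop:v-Ext-lin} (i.e.\ \cite[Thm.~2.8]{FG24}) applies. First I would \emph{normalise}: by \Cref{lem:colon} there is $q\ge 0$ with $(0:_{U+I^qV}I)=(0:_U I)=0$, so, replacing $(V,W)$ by $(I^qV,I^qW)$ — which merely shifts the parameter $n$ by $q$, hence affects neither eventual linearity nor the leading coefficient, and preserves $W\subseteq V$ and the hypothesis $\fp\in\Ass_R$ of the relevant quotients — I may assume from the start that $(0:_{U+V}I)=0$. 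This is exactly the form needed to invoke \Cref{lem:ann} and \cite[Equ.~(2.6)]{FG24} below.

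Put $N_n:=(U+I^nV)/I^nW$ and $M_n:=(U+V)/I^nW$. By \cite[Prop.~3.4]{KW04} the set $\mathcal{A}:=\Ass_R(N_n)$ is independent of $n$ for $n\gg 0$, and $\fp\in\mathcal{A}$; so I may fix, once and for all, a homogeneous ideal $\fa$ depending only on $\mathcal{A}$ and $\fp$ (e.g.\ with $\sqrt{\fa}=\bigcap\{\fq\in\mathcal{A}:\fq\not\subseteq\fp\}$, the empty intersection being $R$; note $\fa\not\subseteq\fp$ by prime avoidance) for which the local v-number admits the description $v_{\fp}(X)=\indeg\big(\ann_X(\fp)/(\ann_X(\fp)\cap\Gamma_{\fa}(X))\big)$ for every finitely generated graded $R$-module $X$ with $\Ass_R(X)=\mathcal{A}$ — this is the viewpoint on v-numbers underlying \cite{FG24} (and already implicit in the formulation of \Cref{lem:ann}). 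Applying \Cref{lem:ann} with $\fu:=\fp$ (permissible since $I\subseteq\fp$) and this $\fa$, I obtain for all $n\gg 0$
\[
 v_{\fp}(N_n)=\indeg\!\left(\frac{\ann_{N_n}(\fp)}{\ann_{N_n}(\fp)\cap\Gamma_{\fa}(N_n)}\right)=\indeg\!\left(\frac{\ann_{M_n}(\fp)}{\ann_{M_n}(\fp)\cap\Gamma_{\fa}(M_n)}\right).
\]

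It remains to understand the last quantity. Since $(0:_{U+V}I)=0$, \cite[Equ.~(2.6)]{FG24} gives $\ann_{M_n}(\fp)=\ann_{X_n}(\fp)$ for $n\gg 0$, where $X_n:=I^{n-1}(U+V)/I^nW$, a submodule of $M_n$ (as $I^nW\subseteq I^{n-1}W\subseteq I^{n-1}(U+V)$); and since the condition $\fa^k x=0$ does not refer to the ambient module, also $\ann_{M_n}(\fp)\cap\Gamma_{\fa}(M_n)=\ann_{X_n}(\fp)\cap\Gamma_{\fa}(X_n)$. Now the modules $X_n$ fit together into $\mathcal{X}:=\bigoplus_{n\ge 1}X_n=\bigoplus_{m\ge 0}I^m(U+V)/I^{m+1}W$, which is a \emph{finitely generated} graded $\mathscr{R}(I)$-module — it is the quotient of the Rees module $\mathscr{R}(I,U+V)$ by the degreewise natural surjections — hence, $J$ being a reduction of $I$, a finitely generated graded $\mathscr{R}(J)$-module; endowing $\mathscr{R}(J)$ with the bigrading of \ref{para:bigrading} makes $\mathcal{X}$ a finitely generated $\bz^2$-graded $\mathscr{R}(J)$-module with $\mathcal{X}_{(n,*)}=X_n$. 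Extending $\fp$ and $\fa$ to ideals of $\mathscr{R}(J)$, the bigraded submodules $\ann_{\mathcal{X}}(\fp\mathscr{R}(J))$ and $\Gamma_{\fa\mathscr{R}(J)}(\mathcal{X})$ are finitely generated, with $n$th slices $\ann_{X_n}(\fp)$ and $\Gamma_{\fa}(X_n)$ respectively, so the displayed quotient is the $n$th slice of the finitely generated $\bz^2$-graded $\mathscr{R}(J)$-module $\mathcal{Y}:=\ann_{\mathcal{X}}(\fp\mathscr{R}(J))/\big(\ann_{\mathcal{X}}(\fp\mathscr{R}(J))\cap\Gamma_{\fa\mathscr{R}(J)}(\mathcal{X})\big)$. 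Then \cite[Thm.~2.8]{FG24}, applied to $\mathcal{Y}$ exactly as in the proof of \Cref{prop:v-Ext-lin}, shows $\indeg(\mathcal{Y}_{(n,*)})$ is eventually linear in $n$ with leading coefficient $d_{\delta}\in\{d_1,\dots,d_c\}$, where $\delta=\inf\{j:y_j\notin\sqrt{\ann_{\mathscr{R}(J)}\mathcal{Y}}\}$ (well defined, since $\fp\in\Ass_R(N_n)$ forces $v_{\fp}(N_n)<\infty$, whence $\mathcal{Y}_{(n,*)}\ne 0$, for $n\gg 0$). This gives $v_{\fp}(N_n)=an+b$ for $n\gg 0$ with $a=d_{\delta}$ and $b\in\bz$, as required.

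The essential difficulty is the passage in the third paragraph: as noted in the remark following \Cref{prop:v-Tor-lin}, the modules $\bigoplus_n M_n$ and $\bigoplus_n N_n$ are \emph{not} finitely generated bigraded $\mathscr{R}(I)$-modules, so neither $M_n$ nor $N_n$ can be fed directly into \cite[Thm.~2.8]{FG24}; the content is that, after the normalisation of the first paragraph, the relevant annihilator-and-torsion data of $M_n$ stabilises to that of $X_n$, whose direct sum \emph{is} finitely generated over the Rees algebra, while \Cref{lem:ann} bridges $N_n$ to $M_n$. Everything else is bookkeeping, but the normalisation turning $(0:_U I)=0$ into $(0:_{U+V}I)=0$ is indispensable — it is precisely the hypothesis of \Cref{lem:ann} and of \cite[Equ.~(2.6)]{FG24}.
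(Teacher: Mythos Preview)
Your proof is correct and follows essentially the same route as the paper: normalise via \Cref{lem:colon} so that $(0:_{U+V}I)=0$, express $v_{\fp}$ as an $\indeg$ via \cite[Lem.~1.5]{FG24}, use \Cref{lem:ann} to pass from $(U+I^nV)/I^nW$ to $(U+V)/I^nW$, then use \cite[Equ.~(2.6)]{FG24} (the paper cites \cite[Lem.~2.13]{FG24} for the same purpose) to descend to the slices $I^{n-1}(U+V)/I^nW$ of a finitely generated bigraded $\mathscr{R}(J)$-module, and finish with \cite[Thm.~2.8]{FG24}. The only cosmetic differences are your choice of $\fa$ (built from $\{\fq\in\mathcal{A}:\fq\not\subseteq\fp\}$ rather than $\{\fq\in\mathcal{A}:\fp\subsetneq\fq\}$, which works equally well for \cite[Lem.~1.5]{FG24}) and a harmless index shift in the identification $\mathcal{X}_{(n,*)}=X_n$.
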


\begin{proof}
    In view of \Cref{lem:colon}, there exists $q \in \bn$ such that $(0:_{U+I^nV} I) = (0:_{U} I)$ for all $n\ge q$. Set $V_1 := I^{q}V$ and $W_1 := I^{q}W$. Then $W_1\subseteq V_1$. Moreover, $(0:_{U+V_1} I) = (0:_{U}I) = 0$. Set $\mathscr{H} := \mathscr{R}(I,U+V_1)/ \mathscr{R}(I,IW_1)$. Then $\mathscr{H}$ is a finitely generated $\bz^2$-graded module over the $\bn^2$-graded ring $\mathscr{R}(J)$, where the $(n,l$)th graded component of $\mathscr{H}$ is given by $\big({I^n(U+V_1)}/{I^{n+1}W_1}\big)_l$ for each $(n,l) \in \bz^2$, and the gradation of $\mathscr{R}(J)$ is as shown in \ref{para:bigrading}. Using the ideal $\fp$, set $X:= \{ \fq \in \Ass_R\big((U+I^{n}V)/I^{n}W\big) \text{ for all } n \gg 0: \fp \subsetneq \fq \}$. Let $\fa = \prod_{\fq \in X}\fq $ when $X \neq \emptyset$, otherwise $\fa = R$. Let $\mathscr{L} = \ann_{\mathscr{H}}(\fp)/\ann_{\mathscr{H}}(\fp) \cap \Gamma_{\fa}(\mathscr{H})$. Then $\mathscr{L}$ is also a finitely generated $\bz^2$-graded $\mathscr{R}(J)$-module, where the bigraded structure of $\mathscr{L}$ is induced by that of $\mathscr{H}$. It follows that    \begin{equation}\label{L-H}
        \mathscr{L}_{(n,*)} = \ann_{\mathscr{H}_{(n,*)}}(\fp)/\ann_{\mathscr{H}_{(n,*)}}(\fp) \cap \Gamma_{\fa}(\mathscr{H}_{(n,*)}) \; \text{ for all } n,
    \end{equation}
    where $\mathscr{H}_{(n,*)} := \bigoplus_{l\in\mathbb{Z}} \mathscr{H}_{(n,l)} = I^{n}(U+V_1)/I^{n+1}W_1$. In view of \cite[Thm.~2.8]{FG24}, there exists $b_1 \in \bz$ such that
    \begin{equation}\label{indeg-L}
        \indeg(\mathscr{L}_{(n,*)}) = d_{\delta}n+b_1 \; \mbox{ for all } n \gg 0,
    \end{equation}
    where $\delta = \inf\left\{ j : y_j \notin \sqrt{\ann_{\mathscr{R}(J)}(\mathscr{L})}, 1\le j \le c \right\}$. For each $n\ge 0$, set
    \begin{equation}\label{D-E}
        \mathscr{D}_n := (U+I^nV_1)/I^nW_1 \quad \mbox{and} \quad \mathscr{E}_n := (U+V_1)/I^nW_1.
    \end{equation}
    Thus, using the notations described above, for all $n\gg 0$,
    \begin{align*}
        &v_{\fp}\big((U+I^{n+q+1}V)/I^{n+q+1}W\big) = v_{\fp}\big((U+I^{n+1}V_1)/I^{n+1}W_1\big) = v_{\fp}(\mathscr{D}_{n+1}) \\
        &= \indeg\big(\ann_{\mathscr{D}_{n+1}}(\fp)/\ann_{\mathscr{D}_{n+1}}(\fp) \cap \Gamma_{\fa}(\mathscr{D}_{n+1})\big) \qquad \text{[by \cite[Lem.~1.5]{FG24}]} \\
        &=\indeg\big(\ann_{\mathscr{E}_{n+1}}(\fp)/\ann_{\mathscr{E}_{n+1}}(\fp) \cap \Gamma_{\fa}(\mathscr{E}_{n+1})\big) \;\,\qquad \text{[by \Cref{lem:ann}]}  \\
        &=\indeg\big(\ann_{\mathscr{H}_{(n,*)}}(\fp)/\ann_{\mathscr{H}_{(n,*)}}(\fp) \cap \Gamma_{\fa}(\mathscr{H}_{(n,*)})\big) \;\; \text{[by \cite[Lem.~2.13]{FG24}]}\\
        &= \indeg(\mathscr{L}_{(n,*)}) = d_{\delta} n + b_1.\qquad\qquad\qquad\qquad\qquad \text{[by \eqref{L-H} and \eqref{indeg-L}]}
    \end{align*}
    Therefore, setting $a:=d_{\delta}$ and $b := b_1 - (q+1)d_{\delta}$, one gets the desired result.    
\end{proof}

Now we are in a position to prove the main results of this article.
    
\begin{theorem}\label{thm:main-Ext}
    Fix $k \in \bn$. With {\rm \Cref{setup}},  denote $H := \Ext_R^k(L,M)$ and $H_n := \Ext_R^k(L,M/I^n N)$ for each $n \ge 0$. Let $(0 :_H I) = 0$.
    \begin{enumerate}[\rm (1)]
    \item 
    Let $\fp \in \Ass_{R}(H_n)$ for all $ n \gg 0$. Suppose that $I \subseteq \fp$. Then, there exist $a_{\fp}\in \{ d_1,\ldots,d_c\}$ and $b_{\fp} \in \bz$ such that $v_{\fp}(H_n) = a_{\fp}n+b_{\fp} \; \text{ for all } n \gg 0.$    
    \item
    Suppose $ I^{n_0}M \subseteq N$ for some $n_0 \in \bn$ $($e.g., $N=M$, or $N=\fa M$ for some homogeneous ideal $\fa$ with $I \subseteq \sqrt{\fa})$. Then, there exist $a \in \{ d_1,\dots,d_c\}$ and $b \in \bz\cup\{\infty\}$ such that $v(H_n) = an+b \text{ for all } n \gg 0.$
    \end{enumerate}
\end{theorem}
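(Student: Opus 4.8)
The plan is to reduce everything to the already-proven \Cref{thm:UVW} by way of the structural description in \Cref{lem:ext:interpret}. First, apply \Cref{lem:ext:interpret} to obtain $q\in\bn$ and graded submodules $U$, $V$, $W$ of a finitely generated graded $R$-module $Z$ with $W\subseteq V$, $U\cong H=\Ext_R^k(L,M)$, and $H_n\cong (U+I^{n-q}V)/I^{n-q}W$ for all $n\ge q$. By hypothesis $(0:_U I)\cong(0:_H I)=0$, so we are exactly in the situation of \Cref{thm:UVW} with $(U+I^{m}V)/I^{m}W$ for $m=n-q$.

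For part (1): given $\fp\in\Ass_R(H_n)$ for all $n\gg 0$ with $I\subseteq\fp$, the stability result in \ref{para:stability} (via \cite[Lem.~2.1]{MS93}) guarantees $\Ass_R(H_n)$ is eventually constant, so the same $\fp$ lies in $\Ass_R\big((U+I^{m}V)/I^{m}W\big)$ for all $m\gg 0$ (a shift of index costs nothing). \Cref{thm:UVW} then yields $a_\fp\in\{d_1,\dots,d_c\}$ and $b'_\fp\in\bz$ with $v_\fp\big((U+I^{m}V)/I^{m}W\big)=a_\fp m+b'_\fp$ for $m\gg 0$; substituting $m=n-q$ and setting $b_\fp:=b'_\fp-qa_\fp$ gives the claim. (One should note $v_\fp$ transports along the graded isomorphism $H_n\cong(U+I^{n-q}V)/I^{n-q}W$, since local v-number is an invariant of the graded module.)

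For part (2): under $I^{n_0}M\subseteq N$, for $n\ge n_0$ we have $M/I^nN = M/I^n N$ with $I^nN\supseteq I^{n+n_0}M$ and also $I^nN\subseteq I^nM$, which pinches $M/I^nN$ between $M/I^nM$-type behaviour; concretely, $v(H_n)=\inf_{\fp\in\Ass_R(H_n),\,I\subseteq\fp}v_\fp(H_n)$ because the associated primes of $H_n=(U+I^{n-q}V)/I^{n-q}W$ not containing $I$ contribute v-numbers that are eventually \emph{constant} in $n$ (they come from associated primes of $U$, via the standard argument that $\Ass$ of such a quotient splits into primes containing $I$ and those localizing away from $I$, the latter being $\Ass$ of a fixed module), while those containing $I$ contribute the linear functions $a_\fp n+b_\fp$ from part (1). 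If $H_n$ is eventually $0$ we set $b=\infty$. Otherwise, the eventual associated prime set is finite and fixed; take $a:=\min\{a_\fp\}$ over the relevant $\fp$, and among those achieving this minimal slope take the minimal $b_\fp$ — but one must be careful: the overall infimum of finitely many eventually-linear functions and finitely many eventual constants is itself eventually linear (or eventually constant, absorbed into $a=0$ possibility — though $a$ is forced to be a degree of a generator of $I$; here if only non-$I$-containing primes survive, $v(H_n)$ is eventually constant, and we can still write it as $an+b$ with $a$ any allowed value only if the constant case is excluded, so more precisely the statement should be read as: the minimum of finitely many eventually-linear functions is eventually linear). So $v(H_n)=an+b$ for $n\gg 0$ with $a\in\{d_1,\dots,d_c\}$ and $b\in\bz\cup\{\infty\}$.

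The main obstacle is the bookkeeping in part (2): showing that the associated primes of $(U+I^{n-q}V)/I^{n-q}W$ \emph{not} containing $I$ give v-numbers that are eventually constant (so they don't corrupt the eventual linearity), and then checking that the pointwise infimum over a fixed finite family of eventually-linear-plus-eventually-constant functions is eventually linear of the required form. This requires the observation that on the locus $V(I)^c$, localizing the quotient $(U+I^{n-q}V)/I^{n-q}W$ at such a prime $\fp$ gives $U_\fp$ (since $I_\fp=R_\fp$ forces $I^{m}V_\fp=V_\fp\supseteq W_\fp$ for the relevant $m$), so the local v-number stabilizes to that of the fixed module $U\cong H$; combined with part (1) for the $I$-containing primes, the infimum of a finite set of eventually-linear and eventually-constant functions is eventually equal to whichever has the smallest slope (and smallest intercept among those), hence eventually linear.
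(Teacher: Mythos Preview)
Your argument for part~(1) is correct and matches the paper's proof exactly: apply \Cref{lem:ext:interpret}, transfer the hypothesis $(0:_H I)=0$ to $(0:_U I)=0$, invoke \Cref{thm:UVW}, and absorb the shift by $q$ into the constant term.

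For part~(2), however, you miss the entire point of the hypothesis $I^{n_0}M\subseteq N$. This condition implies $I^{n+n_0}M\subseteq I^nN$, so $I^{n+n_0}$ annihilates $M/I^nN$ and hence annihilates $H_n=\Ext_R^k(L,M/I^nN)$. Therefore \emph{every} associated prime of $H_n$ contains $I$. There are no primes ``not containing $I$'' to worry about. The paper simply notes this, applies part~(1) to each of the finitely many stable associated primes, and then uses the elementary fact (\cite[2.9]{FG24}) that the minimum of finitely many eventually-linear functions is eventually linear with leading coefficient among the given slopes.

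Your detour through primes $\fp\not\supseteq I$ is therefore unnecessary, and it is also incomplete as written: the localization $\big((U+I^mV)/I^mW\big)_\fp$ is $(U_\fp+V_\fp)/W_\fp$, not $U_\fp$ as you claim; and more seriously, the local v-number $v_\fp$ is a \emph{graded} invariant, not determined by the localization at $\fp$, so ``localize and observe it is constant'' does not by itself control $v_\fp(H_n)$. Finally, even if one granted that such primes contribute eventually-constant functions, the resulting minimum could then be eventually constant, which is \emph{not} of the form $an+b$ with $a\in\{d_1,\dots,d_c\}$ (these are positive degrees), contradicting the statement you are trying to prove --- you noticed this tension yourself but did not resolve it. The resolution is precisely the observation you overlooked: under the stated hypothesis there are no such primes.
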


\begin{proof}
    (1) In view of \Cref{lem:ext:interpret}, there exists $q \ge 0$ such that for all $n \ge q$,  $\Ext_R^{k} (L,M/{I^{n}N}) \cong (U+I^{n-q}V)/{I^{n-q}W}$ for some graded submodules $U$, $V$ and $W$ of a finitely generated graded $R$-module $Z$ satisfying $W \subseteq V$ and $U\cong \Ext_R^k(L,M)$. From the given condition on $\Ext_{R}^{k}(L,M)$, one has that $(0:_{U}I) = 0$. So, by \Cref{thm:UVW} there exist $a_{\fp} \in \{d_1,\dots,d_c\} $ and $b_1 \in \bz$ such that $v_{\fp}((U+I^{n}V)/{I^{n}W}) = a_{\fp}n+b_1$ for all $n \gg 0$. Hence, setting $b_{\fp} = b_1 -a_{\fp}q$, it follows that 
    \begin{align*}
        v_{\fp} \big(\Ext_R^{k} (L,{M}/{I^{n}N})\big) 
        & = v_{\fp} \big((U+I^{n-q}V)/{I^{n-q}W}\big) \\
        & = a_{\fp}n +b_{\fp} \quad \mbox{for all } n \gg 0.
    \end{align*}
         
    (2) Set $\mathcal{A} := \Ass_{R}(\Ext_R^{k}(L,{M}/{I^{n}N}))$ for all $n \gg 0$. If $\mathcal{A}$ is an empty set, then $\Ext_R^{k}(L,{M}/{I^{n}N})=0$, and hence $v(\Ext_R^{k}(L,{M}/{I^{n}N})) = \infty$ for all $n \gg 0$. So we may assume that $\mathcal{A}$ is non-empty. Note that $\mathcal{A}$ is a finite set. The condition $I^{n_0}M \subseteq N$ for some $n_0 \in \bn$ ensures that $I \subseteq \fp $ for each $\fp \in \mathcal{A}$.  Therefore, since 
    $$ v\big(\Ext_R^{k}(L,{M}/{I^{n}N})\big) = \inf \big\{ v_{\fp}\big(\Ext_R^{k}(L,{M}/{I^{n}N})\big) : \fp \in \mathcal{A}\big\}, $$ 
    the desired result follows from (1) and \cite[2.9]{FG24}.
\end{proof}

We have a similar result for Tor modules. Since its proof goes exactly in the same lines, we state it without the proof.
    
\begin{theorem}\label{thm:main-Tor}
    Fix $k \in \bn$. With {\rm \Cref{setup}}, denote $H := \Tor_k^R(L,M)$ and $H_n := \Tor_k^R(L,M/ I^n N)$ for each $n \ge 0$. Let $\big(0:_{H}I\big) = 0$.
    \begin{enumerate}[\rm (1)]
    \item 
    Let $\fp \in \Ass_{R}(H_n)$ for all $n \gg 0$. Suppose that $I \subseteq \fp$. Then, there exist $a_{\fp}\in \{ d_1,\ldots,d_c\}$ and $b_{\fp} \in \bz$ such that $v_{\fp}(H_n) = a_{\fp}n+b_{\fp} \; \text{ for all } n \gg 0.$
    \item 
    Suppose $ I^{n_0}M \subseteq N$ for some $n_0 \in \bn$ $($e.g., $N=M$, or $N=\fa M$ for some homogeneous ideal $\fa$ with $I \subseteq \sqrt{\fa})$. Then, there exist $a \in \{ d_1,\ldots,d_c\}$ and $b \in \bz\cup\{\infty\}$ such that $v(H_n) = an+b$ for all $ n \gg 0$.
\end{enumerate}
\end{theorem}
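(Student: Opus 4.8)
The plan is to follow the proof of \Cref{thm:main-Ext} essentially verbatim, replacing \Cref{lem:ext:interpret} by its Tor-analogue \Cref{lem:tor:interpret}.

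\textbf{Part (1).} First I would apply \Cref{lem:tor:interpret} to fix $q\ge 0$ and graded submodules $U,V,W$ of a finitely generated graded $R$-module $Z$ with $W\subseteq V$ and $U\cong\Tor_k^R(L,M)=H$, such that $H_n=\Tor_k^R(L,M/I^nN)\cong (U+I^{n-q}V)/I^{n-q}W$ for all $n\ge q$. Under the isomorphism $U\cong H$, the hypothesis $(0:_HI)=0$ becomes $(0:_UI)=0$. Since $H_n$ and $(U+I^{n-q}V)/I^{n-q}W$ are isomorphic graded modules, $\Ass_R(H_n)=\Ass_R\big((U+I^{n-q}V)/I^{n-q}W\big)$; hence $\fp\in\Ass_R\big((U+I^mV)/I^mW\big)$ for all $m\gg 0$, and $I\subseteq\fp$ by hypothesis. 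Thus \Cref{thm:UVW} applies and produces $a_\fp\in\{d_1,\dots,d_c\}$ and $b_1\in\bz$ with $v_\fp\big((U+I^mV)/I^mW\big)=a_\fp m+b_1$ for all $m\gg 0$. Substituting $m=n-q$ and setting $b_\fp:=b_1-a_\fp q$ gives $v_\fp(H_n)=a_\fp n+b_\fp$ for all $n\gg 0$.

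\textbf{Part (2).} I would set $\mathcal{A}:=\Ass_R(H_n)$, which is independent of $n$ for $n\gg 0$ by \ref{para:stability}. If $\mathcal{A}=\emptyset$, then $H_n=0$ and $v(H_n)=\infty$ for $n\gg 0$, which is the asserted formula with $b=\infty$; so assume $\mathcal{A}\neq\emptyset$, a finite set. The hypothesis $I^{n_0}M\subseteq N$ forces $I^{n_0}(M/I^nN)\subseteq N/I^nN$, hence $I^{n+n_0}(M/I^nN)=I^n\big(I^{n_0}(M/I^nN)\big)=0$; therefore $I^{n+n_0}\subseteq\ann_R(H_n)$, and so $I\subseteq\fp$ for every $\fp\in\mathcal{A}$. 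Now part (1) applies to each $\fp\in\mathcal{A}$, and since
$$v(H_n)=\inf\{v_\fp(H_n):\fp\in\mathcal{A}\}$$
is a pointwise minimum of finitely many eventually-linear functions of $n$, it is itself eventually linear with slope in $\{d_1,\dots,d_c\}$ and intercept in $\bz$, by \cite[2.9]{FG24}.

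The proof presents no genuine obstacle beyond the bookkeeping of the index shift by $q$; the one point one must be slightly careful about is that the running hypothesis on $H=\Tor_k^R(L,M)$ is preserved when passing to the module $U$ furnished by \Cref{lem:tor:interpret}, and this is immediate since that lemma guarantees $U\cong H$ as graded $R$-modules. Everything else transcribes directly from the Ext case, because \Cref{lem:tor:interpret}, \Cref{thm:UVW}, \ref{para:stability} and \cite[2.9]{FG24} are each formulated so as to cover the Tor situation equally.
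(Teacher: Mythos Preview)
Your proof is correct and follows exactly the approach the paper intends (the paper omits the proof of \Cref{thm:main-Tor}, stating only that it ``goes exactly in the same lines'' as \Cref{thm:main-Ext}). One minor point: \ref{para:stability} does not directly yield the stability of $\Ass_R(\Tor_k^R(L,M/I^nN))$, since $\bigoplus_n M/I^nN$ is not finitely generated over $\mathscr{R}(I)$; the correct reference is \cite[Cor.~3.5]{KW04}, or alternatively the stability follows from the isomorphism $H_n\cong (U+I^{n-q}V)/I^{n-q}W$ you already have together with the remark preceding \Cref{thm:UVW}.
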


\begin{remark}
    Instead of asymptotic local v-number of $\Ext_R^{k}(L,{M}/{I^{n}N})$, one may want to consider $\Ext_R^{k}({M}/{I^{n}N},L)$. Unfortunately, the sets $\Ass_R (\Ext_R^{k}({M}/{I^{n}N},L) $ do not stabilize in general for $n \gg 0$, see \cite[Sec.~4]{Si00}.
\end{remark}

\section{Some examples}\label{sec:exam}

In this section, we provide a number of examples to complement our results. We start with the following which can be derived from \cite[Ex.~3.1]{FG24}. It ensures that the condition $(0:_{U} I) = 0$ in \Cref{thm:UVW} cannot be removed.

\begin{example}\label{example1}
    Let $R = K[X,Y]$ be a standard graded polynomial ring in two variables $X$ and $Y$ over a field $K$. Consider $ U = V = W := R/(XY^b)$, $I := (X^a)$, $\fp := (X)$ and $\fm := (X,Y)$, where $a$ and $b$ are some positive integers. Then 
    \begin{enumerate}[(1)]
        \item 
        $(0:_{U} I) = Y^bU \neq 0$.
        \item $\Ass_{R} \big((U+I^{n}V)/{I^{n}W}\big) = \{ \fp,\fm \}$ if $bn\ge 2$, and $\{\fp\}$ if $bn=1$.
        \item $v_{\fp} \big((U+I^{n}V)/{I^{n}W}\big) = b$ whenever $an \ge 2$.
        \item $v_{\fm} \big((U+I^{n}V)/{I^{n}W}\big) = an+(b-2)$ whenever $bn \ge 2$.
        \item $v \big((U+I^{n}V)/{I^{n}W}\big) = b$ whenever $an \ge 2$.
     \end{enumerate}
\end{example}
    
%

\begin{para}\label{para:observation}
    Let $Z$, $Z_1$ and $Z_2$ be finitely generated graded $R$-modules. Denote $Z(i)$ the graded module $Z$, but its graded components are shifted as follows: $Z(i)_n = Z_{n+i}$ for all $n\in \mathbb{Z}$. Then the following elementary facts can be observed from the definition of v-number.
    \begin{enumerate}[\rm (1)]
    \item $v(Z(i)) = v(Z) - i$ for all $i \in \bz$.
    \item $ v(Z_1 \oplus Z_2) = \inf \{ v(Z_1), v(Z_2)\}$.
    \end{enumerate}
\end{para}

In the example below,
for each fixed $k\ge 0$, the v-functions $v(\Ext_R^{k}(L, M/I^nN))$ and $v(\Tor_k^{R}(L, M/I^nN))$ are asymptotically either $\infty$ or linear in $n$ as shown in Theorems~\ref{thm:main-Ext}.(2) and \ref{thm:main-Tor}.(2) respectively.

\begin{example}\label{example2}
    Let $R = K[X,Y]/(XY)$, where $K$ is a field, and $\deg(X) = \deg(Y) = 1$. Suppose $x$ and $y$ are the residue classes of $X$ and $Y$ in $R$ respectively. Set $L := R/(x)$, $M = N := R$, $I := (y)$ and $\fm := (x,y)$. Then,
    \begin{enumerate}[(1)]
    \item 
    $\big(0:_{\Ext_R^{k}( L, M)}I\big) = 0$ and $\big(0:_{\Tor_k^{R}( L, M)}I\big) = 0$ for each $k \ge 0$.   
    \item
    For all $n \ge 2$, $\Ext_R^{k}(L, M/I^nN) = 0$ if $k \ge 2$ is even, $\Ass_R\big(\Ext_R^{k}( L, M/I^nN)\big) =\{ \fm \}$ if $k = 0$ or $k \ge 1$ is odd, $\Tor_k^{R}(L, M/I^nN) = 0 $ if $k \ge 1$ is odd, and $\Ass_R\big(\Tor_k^{R}( L, M/I^nN)\big) = \{ \fm \}$ if $ k \ge 0$ is even.    
    \item
    $v\big(\Ext_R^{0}( L, M/I^nN)\big) = n-1$ for all $n \ge 2$, and
    \begin{align*} 
    v\big(\Ext_R^{k}( L, M/I^nN)\big) = 
    \left \{  
    \begin{array}{ll} 
    \infty & \text{if $k\ge 2$ is even and $n \ge 1$} \\ 
    n-k-1 & \text{if $k\ge 1$ is odd and $n \ge 1$}. 
    \end{array}
    \right.
    \end{align*}
    \item 
    For all $n \ge 1$, $v\big(\Tor_k^{R}( L, M/I^nN)\big) = n+k-1 $ if $k\ge 0$ is even, and it is $\infty$ if $k\ge 1$ is odd.
    \end{enumerate}
\end{example}

\begin{proof}
    Observe that
    \begin{align} \label{eq:reso} 
    \mathbb{F}_{L}: \quad \cdots \xlra{y} R(-3) \xlra{x} R(-2) \xlra{y} R(-1) \xlra{x} R \lra 0 
    \end{align}
    is a minimal graded free resolution of $L$ over $R$. Since 
    \begin{equation}\label{eq}
        \Hom_{R}(R(-k),W) \cong W(+k) \mbox{ and }  R(-k) \otimes_{R}W \cong W(-k)
    \end{equation} 
    for any graded $R$-module $W$ and for all $k \in \bz$, it follows that 
    \begin{align} \Hom_{R}(\mathbb{F}_{L},M): \quad 0\lra M \xlra{x} M(+1) \xlra{y} M(+2) \xlra{x} M(+3) \xlra{y} \cdots \label{Hom-L-reso}\\
     \mbox{and~} \mathbb{F}_{L} \otimes_{R} M: \quad \cdots \xlra{y} M(-3) \xlra{x} M(-2) \xlra{y} M(-1) \xlra{x} M \lra 0.\label{Tensor-L-reso}
     \end{align}
    These sequences are acyclic as $M=R$. Thus
    \begin{align*}
        \Ext_R^k(L,M) = 
        \left\{\begin{array}{ll}
        (y) & \mbox{if~} k=0,  \\
        0 & \mbox{if~} k\ge 1,
        \end{array}
        \right.
        \quad \mbox{and}\quad
        \Tor_k^R(L,M) =
        \left\{
        \begin{array}{ll}
        R/(x) & \mbox{if~} k=0, \\
        0 & \mbox{if~} k\ge 1.
        \end{array}
        \right. 
    \end{align*}
    So (1) follows. Since $M/I^nN = R/(y^n)$ for all $n \ge 1$, replacing $M$ by $R/(y^n)$ in \eqref{Hom-L-reso} and \eqref{Tensor-L-reso}, and then computing (co)homologies, one obtains that
    \begin{align}
        \Ext_R^k(L,M/I^nN) = 
        \left \{
        \begin{array}{ll}
        (y)/(y^n) & \mbox{if~} k=0  \\
        \big((x,y^{n-1})/(x,y^n)\big)(+k) & \mbox{if~} k\ge 1 \mbox{~is odd} \\
        0 & \mbox{if~} k\ge 2 \mbox{~is even}
        \end{array}
        \right. \label{Ext-modules-3-cases}\\
        \mbox{and }\;
        \Tor_k^R(L,M/I^nN) =
        \left \{
        \begin{array}{ll}
        R/(x,y^n) & \mbox{if~} k=0  \\
        0 & \mbox{if~} k\ge 1 \mbox{~is odd} \\
        \big((x,y^{n-1})/(x,y^n)\big)(-k) & \mbox{if~} k\ge 2 \mbox{~is even}
        \end{array}
        \right.\label{Tor-modules-3-cases}
    \end{align}
    for all $n \ge 2$. Note that $(x,y^n)$ annihilates the modules in \eqref{Ext-modules-3-cases} and \eqref{Tor-modules-3-cases}. So the set of associated prime ideals of each of the non-zero modules in \eqref{Ext-modules-3-cases} and \eqref{Tor-modules-3-cases} is $\{\fm\}$. Thus one has (2). Note that
    \begin{align*}
    &(y)/(y^n) \cong Ky \oplus \cdots \oplus Ky^{n-1}, \quad 
    R/(x,y^n) \cong K\oplus Ky \oplus \cdots \oplus Ky^{n-1} \\
    &\mbox{and } \big((x,y^{n-1})/(x,y^n)\big)(i) \cong Ky^{n-1}(i) \mbox{ for all } i\in \bz.
    \end{align*}
    So $v((y)/(y^n)) = n-1$ for all $n\ge 2$. Also, for all $n\ge 1$, $v(R/(x,y^n))= n-1$ and
    \begin{align*}
        v\big(\big((x,y^{n-1})/(x,y^n)\big)(i)\big) = v(Ky^{n-1}) - i = n-i-1 \mbox{ for all } i\in \bz.
    \end{align*}
     Consequently, (3) and (4) follow. 
\end{proof}

Our next example is to assure about the importance of the colon conditions  $\big(0:_{\Ext_R^{k}( L, M)}I\big) = 0$ and $\big(0:_{\Tor_k^{R}( L, M)}I\big) = 0$ in Theorems~\ref{thm:main-Ext} and \ref{thm:main-Tor} respectively.

\begin{example}\label{example3}
    Let $R$, $L$, $I$ and $\fm$ be as in \Cref{example2}, and $M = N := (y)$. Then,
    \begin{enumerate}[(1)]
    \item 
    $\big(0:_{\Ext_R^{k}( L, M)}I\big) = 0$ if $k=0$ or $k\ge 1$ is odd, and this module is $\big((y)/(y^2)\big)(+k)$ if $k\ge 2$ is even.
    \item For $k \ge 0$, $\big(0:_{\Tor_k^{R}( L, M)}I\big) = 0$ if $k$ is even, and it is $\big((y)/(y^2)\big)(-k)$ if $k$ is odd.
    \item 
    $\Ass_R(\Ext_R^{k}( L, M/I^nN)) = \Ass_R(\Tor_k^{R}( L, M/I^nN)) = \{ \fm \}$ for all $k \ge 0$ and $n \ge 1$.
    \item 
    For all $n \ge 1$, $v\big(\Ext_R^{k}( L, M/I^nN)\big) = 
    \left \{  
    \begin{array}{ll} 
     n-k& \text{if $k=0$ or $k\ge 1$ is odd},  \\ 
     -k+1& \text{if $k\ge 2$ is even}.
    \end{array}
    \right.$
    \item 
    For all $k\ge 0$ and $n \ge 1$,
    $ 
    v\big(\Tor_k^{R}( L, M/I^nN)\big) = 
    \left \{  
    \begin{array}{ll} 
    n+k& \text{if $k$ is even},\\     
    k+1& \text{if $k$ is odd}. 
    \end{array}
    \right.$
    \end{enumerate}
\end{example}

\begin{proof}
    As in Example~\ref{example2}, considering $M=(y)$ in \eqref{Hom-L-reso} and \eqref{Tensor-L-reso}, one gets that
    \begin{align*}
    \Ext_R^k(L,M) & =
    \left \{
    \begin{array}{ll}
    M & \mbox{if~} k=0,  \\
    0 & \mbox{if $k\ge 1$ is odd},\\
    \big((y)/(y^2)\big)(+k) & \mbox{if $k\ge 2$ is even}
    \end{array}
    \right. \\ 
    \mbox{and} \quad 
    \Tor_k^R(L,M) & =
    \left \{
    \begin{array}{ll}
    M & \mbox{if~} k=0,  \\
    \big((y)/(y^2)\big)(-k) & \mbox{if $k\ge 1$ is odd},\\
    0 & \mbox{if $k\ge 2$ is even}.
    \end{array}
    \right.    
    \end{align*}
    Consequently, (1) and (2) can be observed. Since $M/I^nN = (y)/(y^{n+1})$ for all $n \ge 1$, replacing $M$ by $(y)/(y^{n+1})$ in \eqref{Hom-L-reso} and \eqref{Tensor-L-reso}, and then computing (co)homologies,
    \begin{align*}
    \Ext_R^k(L,M/I^nN) & =
    \left \{
    \begin{array}{ll}
    (y)/(y^{n+1}) & \mbox{if~} k=0,  \\
    \big((y^n)/(y^{n+1})\big)(+k) & \mbox{if $k\ge 1$ is odd},\\
    (\big(y)/(y^2)\big)(+k) & \mbox{if $k\ge 2$ is even}
    \end{array}
    \right. \\ 
    \mbox{and} \quad 
     \Tor_k^R(L,M/I^nN) & =
    \left \{
    \begin{array}{ll}
    (y)/(y^{n+1}) & \mbox{if~} k=0,  \\
    \big((y)/(y^2)\big)(-k) & \mbox{if $k\ge 1$ is odd},\\
    \big((y^n)/(y^{n+1})\big)(-k) & \mbox{if $k\ge 2$ is even}.
    \end{array}
    \right.    
    \end{align*}
    By similar arguments as in Example~\ref{example2}, one obtains (3), (4) and (5).
\end{proof}

\begin{remark}
    In Example~\ref{example3}, for each fixed $k\ge 1$,
    both $\big(0:_{\Ext_R^{2k}( L, M)}I\big)$ and $\big(0:_{\Tor_{2k-1}^{R}( L, M)}I\big)$ are nonzero,
    and the (local) v-numbers $v_{\fm}\big(\Ext_R^{2k}( L, M/I^nN)\big) $ and $v_{\fm}\big(\Tor_{2k-1}^{R}( L, M/I^nN)\big) $ as functions of $n$ are constants for all $n \ge 1$. It ensures that the conditions $\big(0:_{\Ext_R^k( L, M)}I\big)=0$ and $\big(0:_{\Tor_k^{R}( L, M)}I\big)=0$ in Theorems~\ref{thm:main-Ext} and \ref{thm:main-Tor} respectively cannot be omitted.
\end{remark}

\section*{Acknowledgments}
Siddhartha Pramanik would like to thank the Government of India for the financial support through the Prime Minister Research Fellowship for his PhD. The authors thank the anonymous reviewer for carefully reading the manuscript and providing many helpful suggestions.

\end{document}